\newcommand\ons{Ozsv{\'a}th and Szab{\'o}}
\newcommand{\bZ}{\mathbb{Z}}
\newcommand{\bQ}{\mathbb{Q}}
\newcommand{\bR}{\mathbb{R}}
\newcommand{\pq}{\frac{p}{q}}
\newcommand{\pqzero}{\frac{p_0}{q_0}}
\newcommand{\pqone}{\frac{p_1}{q_1}}
\newcommand{\HFhat}{\widehat{\operatorname{HF}}}
\newcommand{\rk}{\operatorname{rk}}
\newtheorem{theorem}{Theorem}
\newtheorem{definition}[theorem]{Definition}
\newtheorem{question}[theorem]{Question}
\newtheorem{corollary}[theorem]{Corollary}
\newtheorem{proposition}[theorem]{Proposition}
\newtheorem{lemma}[theorem]{Lemma}
\title[Left-orderability and Dehn surgery]{Left-orderable fundamental groups and Dehn surgery}
\date{November 10, 2010}
\author[Adam Clay]{Adam Clay}
\address{
CIRGET, 
Universit\'e du Qu\'ebec \`a Montr\'eal, 
Case postale 8888, Succursale centre-ville, 
Montr\'eal QC, H3C 3P8.}
\email{aclay@cirget.ca}
\urladdr{http://thales.math.uqam.ca/~aclay}
\author[Liam Watson]{Liam Watson}
\thanks{Both authors partially supported by NSERC postdoctoral fellowships}
\address{Department of Mathematics, UCLA, 520 Portola Plaza, Los Angeles CA, 90095.}
\email{lwatson@math.ucla.edu}
\urladdr{http://www.math.ucla.edu/~lwatson}
\begin{document}

\begin{abstract}
There are various results that frame left-orderability of a group as a geometric property. Indeed, the fundamental group of a 3-manifold is left-orderable whenever the first Betti number is positive; in the case that the first Betti number is zero this property is closely tied to the existence of certain nice foliations. As a result, many large classes of 3-manifolds, including knot complements, are known to have left-orderable fundamental group. However, though the complement of a knot has left-orderable fundamental group, the result of Dehn surgery is a closed 3-manifold that need not have this property. We take this as motivation for the study of left-orderability in the context of Dehn surgery, and establish a condition on peripheral elements that must hold whenever a given Dehn surgery yields a manifold with left-orderable fundamental group. This leads to a workable criterion used to determine when sufficiently positive Dehn surgery produces manifolds with non-left-orderable fundamental group. As examples we produce infinite families of hyperbolic knots -- subsuming the $(-2,3,q)$-pretzel knots -- for which sufficiently positive surgery always produces a manifold with non-left-orderable fundamental group. Our examples are consistent with the observation that many (indeed, all known) examples of L-spaces have non-left-orderable fundamental group, as the given families of knots are hyperbolic L-space knots. Moreover, the behaviour of the examples studied here is consistent with the property that sufficiently positive surgery on an L-space knot always yields an L-space. 
\end{abstract}

\maketitle

\section{Introduction}

This paper studies left-orderable groups in the context of 3-manifold topology, and in particular, the question of whether there exist left-orderings of a fundamental group that descend to certain quotients of the group. To begin, we recall the following:

\begin{definition} A non-trivial group $G$ is left-orderable if there exists a strict total ordering $>$ of the elements of $G$ that is left-invariant: Whenever $g>h$ then $fg>fh$, for any $g,h,f\in G$. \end{definition}

We adhere to the somewhat non-standard convention that the trivial group is not left-orderable. An equivalent definition of left-orderability may be given in terms of positive cones; our convention is consistent with requiring that the positive cone be non-empty.

Left-orderability may be viewed as a geometric property: A countable group is left-orderable if and only if it acts faithfully on $\mathbb{R}$ by order-preserving homeomorphisms. In fact, this leads to a connection between left-orderability and $3$-manifolds as Boyer, Rolfsen and Wiest show that compact, connected, orientable manifolds supporting $\mathbb{R}$-covered foliations\footnote{We suppose throughout that all foliations are co-orientable. For the relevant definitions of $\bR$-covered and taut foliation see \cite{Calegari2007}, for example.} have left-orderable fundamental group. For Seifert fibred manifolds the connection is even stronger: a Seifert fibred space with base orbifold $S^2$ has left-orderable fundamental group if and only if it supports a taut foliation  \cite{BRW2005}. The work of Calegari and Dunfield \cite{CD2003} and Roberts, Shareshian and Stein \cite{RSS2003} provides related results in the hyperbolic setting. In particular, Calegari and Dunfield show that atoroidal rational homology spheres admitting a taut foliation have virtually left-orderable fundamental group (that is, there exists a left-orderable subgroup of finite index). Moreover, restricting to integer homology spheres, the existence of a taut foliation implies left-orderable fundamental group, so that in this setting establishing non-left-orderability of the fundamental group can be a useful obstruction to such a foliation of the manifold \cite{CD2003}. 


Burns and Hale \cite{BH1972} characterize left-orderability in terms of finitely generated subgroups. 

\begin{theorem}[\bf Burns-Hale] A group $G$ is left-orderable if and only if every nontrivial finitely generated subgroup surjects onto a left-orderable group. \end{theorem}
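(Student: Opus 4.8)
The forward implication is immediate: if $G$ carries a left-ordering then so does every subgroup, by restriction, and a left-orderable subgroup surjects onto itself. So the content is the converse, and the plan is to build a left-ordering on $G$ out of left-orderings of its finitely generated pieces. I would work with the positive-cone reformulation of left-orderability: $G$ is left-orderable precisely when there is a subsemigroup $P \subseteq G$ with $G = P \sqcup \{1\} \sqcup P^{-1}$, the ordering being recovered by declaring $g > h$ when $h^{-1}g \in P$. The standard bridge to finitely generated data is the \emph{semigroup criterion}: $G$ is left-orderable if and only if for every finite set $\{g_1, \dots, g_n\}$ of nontrivial elements there is a choice of signs $\epsilon_i \in \{\pm 1\}$ such that the subsemigroup generated by $g_1^{\epsilon_1}, \dots, g_n^{\epsilon_n}$ does not contain the identity. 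Granting this criterion, the whole problem reduces to verifying the finitary condition using the hypothesis.

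I would prove the finitary condition by induction on $n$, the number of elements. For $n=1$ the subgroup $\langle g_1 \rangle$ is nontrivial and finitely generated, hence surjects onto a left-orderable group $L$; its image is a nontrivial cyclic left-orderable group, so $\phi(g_1) \neq 1$, and choosing $\epsilon_1$ so that $\phi(g_1^{\epsilon_1})$ lies in the positive cone of $L$ forces every product of $g_1^{\epsilon_1}$ to map into that cone, hence to be nontrivial. For the inductive step, set $H = \langle g_1, \dots, g_n \rangle$ and take a surjection $\phi \co H \to L$ onto a left-orderable group with kernel $K$. After reindexing, suppose $\phi(g_1), \dots, \phi(g_k)$ are nontrivial and $g_{k+1}, \dots, g_n \in K$. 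Since the images $\phi(g_i)$ generate $L \neq \{1\}$, not all $g_i$ can lie in $K$, so $k \geq 1$ and the kernel carries strictly fewer than $n$ of our elements. For $i \leq k$ pick $\epsilon_i$ with $\phi(g_i^{\epsilon_i})$ positive in $L$; for $i > k$ apply the inductive hypothesis inside $K$---which inherits the standing hypothesis, as every nontrivial finitely generated subgroup of $K$ is one of $G$---to the nontrivial elements $g_{k+1}, \dots, g_n$, obtaining signs with $1$ outside the subsemigroup they generate in $K$.

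It remains to check that these combined choices work. Suppose some product of the $g_i^{\epsilon_i}$ equalled $1$; applying $\phi$ kills the factors lying in $K$ and sends each remaining factor into the positive cone of $L$, so if any factor with index $\leq k$ appeared the image would be a nontrivial element of that cone, a contradiction. Hence only factors from $K$ occur, and the relation then lives in the subsemigroup generated in $K$ by $g_{k+1}^{\epsilon_{k+1}}, \dots, g_n^{\epsilon_n}$, contradicting the inductive choice; this completes the induction. Finally I would upgrade the finitary condition to a genuine positive cone by a compactness argument: the space $\{\pm 1\}^{G \setminus \{1\}}$ is compact, the sign-assignments compatible on each finite set form closed subsets, and the semigroup criterion is exactly the finite intersection property, so a consistent global sign assignment---equivalently a positive cone---exists. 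I expect the inductive step to be the crux: the subtlety is recognizing that the kernel $K$ again satisfies the hypothesis and that surjectivity onto a nontrivial $L$ guarantees $k \geq 1$, so that the induction is genuinely well-founded, while establishing the semigroup criterion itself (the compactness step) is the other essential ingredient.
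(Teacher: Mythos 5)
The paper does not actually prove this statement: it quotes the theorem with a citation to Burns and Hale \cite{BH1972}, so there is no internal proof to compare against. Your argument is correct, and it is in fact the classical Burns--Hale argument: reduce left-orderability to the finite-character (semigroup/positive-cone) criterion via a compactness argument on $\{\pm 1\}^{G\setminus\{1\}}$, then verify the finitary condition by induction on the number of elements, using a surjection $\phi\co \langle g_1,\dots,g_n\rangle \to L$ onto a left-orderable group to split the elements into those with $\phi$-image in the positive cone of $L$ (after choosing signs) and those in $\ker\phi$, to which the inductive hypothesis applies. The two points you flag as the crux are indeed the crux, and you handle both correctly: the kernel inherits the hypothesis because its finitely generated subgroups are finitely generated subgroups of $G$, and $k\ge 1$ because the images $\phi(g_i)$ generate $L$, which is nontrivial. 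Note that this last step silently relies on the paper's stated convention that the trivial group is \emph{not} left-orderable (otherwise ``surjects onto a left-orderable group'' would be vacuous and the induction could not get off the ground); your proof uses this convention in exactly the right place. The only detail left compressed is the verification that the global sign assignment produced by compactness is a genuine positive cone, which requires applying admissibility to sets of the form $\{g,h,gh\}$ and $\{g,g^{-1}\}$; this is routine and standard, and since you invoke the semigroup criterion as a known result (it appears, e.g., in \cite{KM1996}), the proof stands.
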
 

While using this characterization on an arbitrary group is often an intractable problem,  Boyer, Rolfsen and Wiest combine this result with Scott's Compact Core Theorem \cite{Scott1973} to give a natural characterization of left-orderability for fundamental groups of 3-manifolds \cite{BRW2005}.

\begin{theorem}[\bf Boyer-Rolfsen-Wiest]
\label{thm:BRWcriterion}
Let $M$ be a compact, connected, irreducible, orientable 3-manifold, possibly with boundary. Then $\pi_1(M)$ is left-orderable if and only if  $\pi_1(M)$ surjects onto a left-orderable group.\end{theorem}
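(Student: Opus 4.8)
The forward implication is immediate: a nontrivial left-orderable group surjects onto itself, and when $\pi_1(M)$ is trivial neither side holds (recall our convention that the trivial group is not left-orderable). The substance is the converse, so suppose $\phi\co \pi_1(M)\twoheadrightarrow L$ with $L$ left-orderable; note that $L$ is then necessarily nontrivial, torsion-free, and infinite. Writing $G=\pi_1(M)$, my plan is to verify the Burns-Hale criterion: I will show that every nontrivial finitely generated subgroup $H\leq G$ surjects onto a left-orderable group, whence $G$ is left-orderable.

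Fix such an $H$ and split into two cases according to the behaviour of $\phi$. If $\phi(H)\neq 1$, then $\phi|_H\co H\twoheadrightarrow \phi(H)$ exhibits $H$ surjecting onto a nontrivial subgroup of $L$; since any subgroup of a left-orderable group is left-orderable, this case finishes at once. The interesting case is $H\leq\ker\phi$, where $\phi$ carries no information and I must appeal to the topology of $M$.

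In that case, pass to the covering space $p\co \widetilde{M}_H\to M$ corresponding to $H$. As $H$ is finitely generated, Scott's Compact Core Theorem furnishes a compact submanifold $N\subseteq\widetilde{M}_H$ with $\pi_1(N)\cong H$; here $N$ is orientable, being a submanifold of a cover of the orientable $M$, and $\pi_1(N)\neq 1$. If $\partial N=\emptyset$, then $N$ is a closed codimension-zero submanifold of the connected manifold $\widetilde{M}_H$, forcing $N=\widetilde{M}_H$ and hence $[G:H]<\infty$; but $H\leq\ker\phi$ then forces $L\cong G/\ker\phi$ to be a finite quotient of $G$, contradicting the fact that a nontrivial left-orderable group, being torsion-free, is infinite. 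So this subcase cannot arise. If instead $\partial N\neq\emptyset$, I would first use the irreducibility of $M$ — which lifts to irreducibility of $\widetilde{M}_H$, since the two share a universal cover and hence $\pi_2$ — to cap off any $2$-sphere boundary components, each of which bounds a ball in $\widetilde{M}_H$, reducing to a core whose boundary has only positive-genus components. The half-lives-half-dies lemma then gives $\dim_{\bQ}\mathrm{image}\big(H_1(\partial N;\bQ)\to H_1(N;\bQ)\big)=\tfrac12\dim_{\bQ}H_1(\partial N;\bQ)>0$, so $b_1(N)>0$ and $H$ surjects onto $H_1(N;\bZ)$ and thence onto the left-orderable group $\bZ$.

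With both cases handled, Burns-Hale yields that $G=\pi_1(M)$ is left-orderable. The main obstacle is entirely concentrated in the case $H\leq\ker\phi$: there the algebraic surjection is useless and everything rests on the geometric input, namely controlling the compact core well enough to produce a surjection onto $\bZ$. The two delicate points are ruling out a closed core, handled by the finite-index and torsion-freeness argument above, and disposing of spherical boundary components, handled by irreducibility; once the core is seen to have positive-genus boundary, positivity of $b_1(N)$ is routine.
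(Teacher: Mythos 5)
A point of comparison first: the paper does not actually prove this statement --- it is quoted from Boyer--Rolfsen--Wiest \cite{BRW2005}, with only the remark that their proof combines the Burns--Hale criterion with Scott's Compact Core Theorem \cite{Scott1973}. Your proposal reconstructs essentially that argument (Burns--Hale, a case split on the behaviour of $\phi$ on a finitely generated subgroup $H$, Scott's core, and half-lives-half-dies), so the strategy is the intended one; splitting on whether $\phi(H)\neq 1$ rather than on whether $H$ has finite index is a harmless variant, and your finite-index contradiction (a nontrivial left-orderable group is torsion-free, hence infinite) is correct.

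There are, however, genuine soft spots in the sphere-capping step. First, a boundary sphere $S$ of the core $N$ bounds a ball $B$ in $\widetilde{M}_H$, but nothing you say rules out $N\subset B$; this is where the core property must be used: the inclusion $N\hookrightarrow \widetilde{M}_H$ induces an isomorphism onto $H\neq 1$, whereas it would factor through $\pi_1(B)=1$ if $N\subset B$. Second, and more seriously, the conclusion ``a core whose boundary has only positive-genus components'' followed by $\dim_{\bQ}H_1(\partial N;\bQ)>0$ silently assumes the capped core still has nonempty boundary. If \emph{every} component of $\partial N$ is a sphere, capping produces a closed manifold $N'$, half-lives-half-dies is vacuous, and $b_1(N')$ may well be zero; as written, this case slips through your dichotomy. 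The repair is the argument you already gave in the $\partial N=\emptyset$ subcase: $N'$ closed forces $N'=\widetilde{M}_H$, hence $[G:H]<\infty$, hence $L$ finite, a contradiction. Third, your justification that irreducibility passes to the cover (``they share a universal cover and hence $\pi_2$'') is not a proof: $\pi_2=0$ yields irreducibility only via the Poincar\'e conjecture (a null-homotopic embedded sphere bounds a homotopy ball), and the clean, classical citation --- the one Boyer--Rolfsen--Wiest themselves rely on --- is the Meeks--Simon--Yau theorem that covers of irreducible orientable $3$-manifolds are irreducible. With these three local repairs your argument is complete and matches the cited proof.
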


It follows immediately that whenever $H_1(M;\bQ)\ne0$ the group $\pi_1(M)$ is left-orderable. On the other hand, this characterization can be quite subtle for rational homology spheres. In particular, while it follows from this result that the complement of any knot has left-orderable fundamental group, this need not be the case for a rational homology sphere resulting from Dehn surgery on the knot (an operation described below). As a result, the following question seems well motivated and is the principal focus of this work: 

\begin{question}\label{qst:LOsurgery} How does left-orderability of fundamental groups behave with respect to Dehn surgery?\end{question}

To this end, suppose that $M$ is a compact, connected, irreducible, orientable 3-manifold, with $\partial M\cong S^1\times S^1$.  We recall the notion of Dehn filling, following the conventions of Boyer \cite{Boyer2002}.  Given a primitive class $\alpha\in H_1(\partial M;\bZ)/\pm1$ (referred to as a slope) we obtain a closed 3-manifold $M(\alpha)$ via Dehn filling by identifying the boundary of a solid torus $D^2\times S^1$ to the boundary of $M$ in such a way that $\partial D^2\times \{\operatorname{point}\}$ is identified with $\alpha$.   In the case that $M$ is the exterior of a knot $K$ in $S^3$, there is a canonical basis for the fundamental group of the boundary (the peripheral subgroup) given by the knot meridian $\mu$ and the Seifert longitude $\lambda$.  Thus, every slope $\alpha$ may be written in the form $\alpha = \pm(p\mu+q\lambda)$ where the elements $\{ \mu,\lambda \}$ generate the group $\pi_1(\partial M) \cong H_1(\partial M;\bZ)$. In this setting, we generally denote $M(\alpha)$ by $S^3_{p/q}(K)$ and refer to this manifold as the result of Dehn surgery (also referred to as $\pq$-surgery) along the knot $K$.  On the level of the fundamental group Dehn filling corresponds to killing the surgery slope $\alpha$, in the sense that $\pi_1(M(\alpha))\cong \pi_1(M)/\left<\left<\alpha\right>\right>$.

In this setting, Question \ref{qst:LOsurgery} asks if there exists a left-ordering of $\pi_1(M)$ that descends to a left-ordering of the quotient $\pi_1(M(\alpha))$.  For example, such left-orderings exist when $M$ is the complement of the figure eight knot and $\alpha$ is a slope in the interval $(-4, 4)$  \cite{BGW2010}, or when $M$ is the complement of a positive $(r,s)$ torus knot and $\alpha$ is a slope smaller than $rs-r-s$ \cite{BGW2010,Peters2009}.  The tools used in each of these examples are very specific to the geometry of the given situation, and it is not clear how either method might generalize to accommodate a larger class of knots.  Moreover, while the question of preserving (or not preserving) left-orderability under quotients has been studied in the field of orderable groups, few of the general theorems available are applicable to the specialized set-up of Dehn surgery  (see \cite[Chapter 3 and Chapter 5]{KM1996} for a summary of some available results).

Our main result introduces a workable obstruction to the existence of a left-ordering of $\pi_1(M)$ that descends to a left-ordering of $\pi_1(M(\alpha))$. That is, we provide a condition on the left-orderings of the group  $\pi_1(M)$ which ensures that the quotient in question will not inherit any left-ordering. 

\begin{theorem}[\bf see Corollary \ref{cor:interval}] 
Let $M$ denote the exterior of a non-trivial knot $K$ in $S^3$, and fix the canonical basis $\{ \mu,\lambda \}$ for the peripheral subgroup $\pi_1(\partial M)$, as above. If the implication 
\[\mu^{p_0}\lambda^{q_0}>1  \Rightarrow \mu^{p_1}\lambda^{q_1}>1
\]
holds for every left-ordering $>$ of $\pi_1(M)$ then $\pi_1(S^3_{p/q}(K))$ is not left-orderable for any $\frac{p}{q}\in ( \frac{p_0}{q_0},\frac{p_1}{q_1} )$.
\end{theorem}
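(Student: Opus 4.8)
The plan is to argue by contradiction on the slope. Fix $\frac{p}{q}\in\left(\frac{p_0}{q_0},\frac{p_1}{q_1}\right)$, assume that $Q:=\pi_1(S^3_{p/q}(K))=G/N$ is left-orderable, where $G:=\pi_1(M)$ and $N:=\langle\langle \mu^p\lambda^q\rangle\rangle$, and manufacture a single left-ordering of $G$ that violates the displayed implication. Since $H_1(M;\bQ)\ne 0$, Theorem~\ref{thm:BRWcriterion} shows that $G$ is left-orderable, and hence so is every subgroup of $G$; in particular $N$ carries a left-ordering with positive cone $P_N$. The first step is the extension principle for left-orders applied to $1\to N\to G\xrightarrow{\varphi} Q\to 1$: if $\prec$ is a left-ordering of $Q$ with positive cone $P_Q$, then $P:=\varphi^{-1}(P_Q)\cup P_N$ is the positive cone of a left-ordering of $G$. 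One checks directly that $P$ is a subsemigroup and that $G=P\sqcup P^{-1}\sqcup\{1\}$, no conjugation-invariance being needed for left-orders. By construction this ordering induces $\prec$ on $Q$, so the sign of any $g\in G$ with $\varphi(g)\ne 1$ is read off from the sign of $\varphi(g)$ in $(Q,\prec)$.

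Next I would analyze the peripheral subgroup. Because $K$ is non-trivial, $\pi_1(\partial M)\cong\bZ^2$ injects into $G$, so $\mu^{p_0}\lambda^{q_0}$ and $\mu^{p_1}\lambda^{q_1}$ are non-trivial. Under $\varphi$ the subgroup $\bZ^2$ maps to $\bZ^2/(\bZ^2\cap N)$, a subgroup of the torsion-free group $Q$ containing the image of the primitive class $(p,q)$. Choosing $(r,s)$ with $ps-qr=1$, in the generic case $\bZ^2\cap N=\langle(p,q)\rangle$ and the image is infinite cyclic, generated by $g:=\varphi(\mu^r\lambda^s)$; a short computation gives $\varphi(\mu^{p_i}\lambda^{q_i})=g^{b_i}$ with $b_i=pq_i-qp_i$. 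The hypothesis $\frac{p_0}{q_0}<\frac{p}{q}<\frac{p_1}{q_1}$ says exactly that the line $\bR(p,q)$ separates $(p_0,q_0)$ from $(p_1,q_1)$, i.e. $b_0$ and $b_1$ have opposite signs; normalizing the primitive representatives so that $b_0>0>b_1$, and replacing $\prec$ by its reverse if necessary so that $g\succ 1$, we obtain $g^{b_0}\succ 1\succ g^{b_1}$. Lifting through the extension principle yields $\mu^{p_0}\lambda^{q_0}>1$ while $\mu^{p_1}\lambda^{q_1}<1$, contradicting the implication. Reversing $\prec$ is legitimate because the reverse order is again a left-order of $Q$ that our construction lifts, so we are free to select whichever of the two half-planes of positive peripheral classes puts $(p_0,q_0)$ on the positive side.

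The main obstacle is the degenerate case, where $\bZ^2\cap N$ has rank $2$: its image in $Q$ is then finite, hence trivial since $Q$ is torsion-free, so the whole peripheral subgroup dies and both $\mu^{p_0}\lambda^{q_0}$ and $\mu^{p_1}\lambda^{q_1}$ lie in $N$. Here the signs of these elements are governed not by $(Q,\prec)$ but by the chosen cone $P_N$ on $N$, so the argument above gives no information. I would first observe that this case is confined to $p=\pm1$: the core of the filling solid torus generates $H_1(S^3_{p/q}(K))\cong\bZ/p\bZ$, so a vanishing peripheral image forces $\bZ/p\bZ=0$. For the range of slopes relevant to the applications, where $|p|$ is large, the degenerate case therefore never occurs and the generic argument suffices. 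To cover the remaining integer-homology-sphere fillings one must instead produce a left-ordering of the normal closure $N$ whose restriction to $\pi_1(\partial M)$ realizes $\mu^{p_0}\lambda^{q_0}>1>\mu^{p_1}\lambda^{q_1}$; this is an extension-of-order problem on $N$ and is precisely the delicate point, since the hypothesis is exactly the assertion that this prescribed peripheral order does \emph{not} extend to all of $G$.
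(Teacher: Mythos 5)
Your generic case is correct and is, modulo packaging, the paper's own argument: the paper builds the lifted ordering through the short exact sequence (its Proposition \ref{prop:ses}), phrases the peripheral analysis as convexity of $\langle \mu^p\lambda^q\rangle$ inside $\pi_1(\partial M)\cong \bZ\times\bZ$ (Proposition \ref{prop:res} and Lemma \ref{lem:rescon}), and your determinant $b_i=pq_i-qp_i$ is exactly its Lemma \ref{lem:opp}. So the only substantive divergence is the degenerate case, where $\pi_1(\partial M)\cap N$ has rank two, and there your proof has a genuine gap which you yourself flag: you reduce to $p=\pm1$ via homology but then leave the case open, proposing to construct a left-ordering of $N$ with prescribed peripheral signs --- an approach you correctly observe is circular, since that is precisely what the hypothesis forbids.

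The gap closes in one line, and you had all the pieces. In the degenerate case the whole peripheral subgroup lies in $N=\langle\langle\mu^p\lambda^q\rangle\rangle$; in particular $\mu\in N$, so $\langle\langle\mu\rangle\rangle\subseteq N$. But meridional filling of a knot exterior returns $S^3$, so $\pi_1(M)/\langle\langle\mu\rangle\rangle\cong\pi_1(S^3)=1$, i.e.\ $\langle\langle\mu\rangle\rangle=\pi_1(M)$. Hence $N=\pi_1(M)$ and $Q=1$: the degenerate case does not merely trivialize the peripheral image, it kills the entire quotient. This contradicts your standing assumption that $Q$ is left-orderable, since the trivial group is not left-orderable under the paper's convention; alternatively, $Q\neq 1$ by \cite{KM2004} because $K$ is non-trivial. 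Either way the degenerate case is vacuous, and there is no extension-of-orders problem on $N$ to solve (indeed $N=G$ there, which is why the problem looked identical to the hypothesis). This is exactly how the paper's proof of Corollary \ref{cor:interval} dispatches the issue before invoking Theorem \ref{thm:main-criteria}. Note also that your fallback remark that the degenerate case ``never occurs for the slopes relevant to the applications'' does not rescue the statement as written: the interval $\left(\frac{p_0}{q_0},\frac{p_1}{q_1}\right)$ of positive rationals can certainly contain slopes with $p=1$ (e.g.\ $\frac{1}{2}\in\left(\frac{1}{3},\frac{2}{3}\right)$), so the case must be addressed, not excluded.
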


While this criterion applies to a more general class of manifolds (see in particular Theorem \ref{thm:main-criteria}), we are able to apply a variant of this criterion to certain classes of knots in $S^3$. In particular, we show that sufficiently positive surgery on positive $(r,s)$-torus knots gives rise to manifolds with fundamental group that cannot be left-ordered (see Theorem \ref{thm:torus}). Since the result of such a surgery is Seifert fibred \cite{Moser1971}, this recovers a known result (see discussion below) via different means. However, we are also able to extend our calculation to cover a large class of hyperbolic knots. We prove:

\begin{theorem}[\bf see Theorem \ref{thm:pretzel} and Theorem \ref{thm:twisted}]\label{thm:twisted-intro} Let $K$ be a positively $m$-twisted $(3,3k+2)$-torus knot.
If (1) $m\ge0$ and $k=1$, or (2) $m=1$ and $k\ge0$, then $\pi_1(S^3_r(K))$ is not left-orderable for $r\in\bQ$ sufficiently positive.\end{theorem}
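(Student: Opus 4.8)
The plan is to deduce the theorem from the peripheral criterion of Corollary~\ref{cor:interval} (or its generalization Theorem~\ref{thm:main-criteria}). Since ``sufficiently positive'' means all rational slopes exceeding some threshold, I would aim to establish, for a suitable slope $p_0/q_0$, the implication
\[
\mu^{p_0}\lambda^{q_0}>1 \ \Rightarrow\ \mu>1
\]
for \emph{every} left-ordering $>$ of $\pi_1(M)$, where $M$ is the exterior of $K$. Taking the upper slope to be the meridian $\mu$ (the limiting slope $1/0$, whose filling is $S^3$ and hence trivially not left-orderable under our convention), Corollary~\ref{cor:interval} then rules out left-orderability of $\pi_1(S^3_r(K))$ for all $r\in(p_0/q_0,\infty)$, which is exactly the desired conclusion with threshold $p_0/q_0$. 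I would treat the $k=1$ family (subsuming the $(-2,3,q)$-pretzel knots, via Theorem~\ref{thm:pretzel}) and the $m=1$ family (Theorem~\ref{thm:twisted}) in parallel, using the torus-knot computation of Theorem~\ref{thm:torus} as the model.

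First I would record an explicit finite presentation of $\pi_1(M)$ together with words for the meridian $\mu$ and the Seifert longitude $\lambda$, extending the standard one-relator presentation $\langle a,b\mid a^3=b^{3k+2}\rangle$ of the torus-knot group by the relations produced in the twisting region. The distinguished element to watch is the central element $z=a^3=b^{3k+2}$ of the torus-knot group, or its analogue after twisting: the Seifert framing is arranged so that $\lambda$ agrees with such an element up to a power of $\mu$, so that constraining its sign in a given left-ordering is tantamount to relating the signs of $\mu$ and $\lambda$. Because these knots are fibered L-space knots, I may also use the fibered structure $\pi_1(M)\cong F\rtimes\langle\mu\rangle$ with $\lambda\in F$, which realizes conjugation by $\mu$ as the monodromy action and gives an organized way to rewrite peripheral words.

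The technical core is then to show that the defining relation forces the displayed implication in every left-ordering. I would argue by contradiction: assuming a left-ordering with $\mu<1$ yet $\mu^{p_0}\lambda^{q_0}>1$, I would use the relation to express a suitable positive power of one peripheral element as a product of $\mu$-conjugates of the generators, whose signs are pinned down by the standing hypotheses, and extract a sign incompatible with left-invariance. The delicate point throughout is that left-orderings are not conjugation-invariant, so I must track carefully how conjugation by $\mu$ (equivalently, the monodromy) moves elements across the order; the specific twisted structure in cases (1) and (2) is precisely what keeps this bookkeeping finite and tractable.

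I expect the main obstacle to be exactly this last step: establishing the peripheral inequality \emph{uniformly over all left-orderings} rather than for a single or generic one. One cannot invoke conjugation-invariance, so the argument must exploit the fine combinatorics of the defining relation to propagate the sign of $\mu^{p_0}\lambda^{q_0}$ to the sign of $\mu$. A secondary difficulty is identifying the optimal (smallest) threshold $p_0/q_0$: ideally this would match the L-space surgery bound $2g(K)-1$ predicted by the discussion in the introduction, and the need to compromise on sharpness is likely what accounts for the ``sufficiently positive'' hypothesis in place of an explicit slope.
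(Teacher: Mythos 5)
Your overall frame (fix a presentation, fix words for $\mu$ and the surface framing, and feed a sign implication valid in \emph{every} left-ordering into the criterion of Section \ref{sec:criteria}) matches the paper, but the implication you propose to prove is strictly stronger than the one the paper proves, and it is false. You want $\mu^{p_0}\lambda^{q_0}>1 \Rightarrow \mu>1$ in every left-ordering; applying this to the opposite ordering as well, you are asserting that the meridian and the framing element $s=\mu^{p_0}\lambda^{q_0}$ have the \emph{same sign} in every left-ordering of the knot group. No such rigidity holds for these groups. Already for $\langle a,b \mid a^2=b^3\rangle\cong B_3$ the Dubrovina--Dubrovin/Navas isolated ordering whose positive cone is the subsemigroup generated by $a$ and $a^{-1}b$ satisfies $\mu^{6}\lambda=a^{2}>1$ while $\mu=b^{-1}a=(a^{-1}b)^{-1}<1$ (and then $\mu^{p_0}\lambda^{q_0}>1>\mu$ for every slope $p_0/q_0\ge 6$); analogous isolated orderings exist on $\langle a,b\mid a^p=b^q\rangle$ in general, and note that family (1) with $m=0$ \emph{is} the honest torus knot $T_{3,5}$, so this is not a degenerate side case. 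The counterexample also cannot be evaded by choosing a different conjugate of the peripheral subgroup, because the conjugate of a positive cone is again a positive cone. Consequently the contradiction you plan to extract from ``$\mu<1$ yet $\mu^{p_0}\lambda^{q_0}>1$'' does not exist: that sign configuration is realized by genuine left-orderings, and no amount of rewriting with the group relation can rule it out.

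This is precisely why the paper works with the weaker hypothesis of Corollary \ref{crl:main-criteria}: since $\mu$ and $\lambda$ commute, one only needs $s>1 \Rightarrow \mu^{N}s=\mu^{p_0+N}\lambda^{q_0}>1$ for all $N>0$, which can hold (and does) even when $\mu<1$. Indeed, essentially the entire content of the proofs of Theorems \ref{thm:pretzel} and \ref{thm:twisted} is the case $s>1>\mu$: there the relation of $\pi^m_q$ is used to rewrite $\mu^{N}s$ as a product of elements such as $a$, $b$, $\mu^{-1}$, $w=b^{-k}a$, $a^{-1}b^{k+1}$, whose positivity is forced by $s>1$ via a case analysis on the signs of the generators. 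Your plan removes exactly this case by fiat, so the ``technical core'' you sketch never engages the actual difficulty. Two smaller points: Corollary \ref{cor:interval} as stated requires $q_1>0$, so the slope $1/0$ is not an admissible endpoint (were your implication true, commutativity would let you invoke Corollary \ref{crl:main-criteria} instead, so this is cosmetic); and the endpoint slope enters the criterion only through the sign implication --- the non-left-orderability of its own filling, e.g.\ of $S^3$, plays no role.
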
 

The notion of a positively twisted torus knot is introduced in Section \ref{sec:torus}. This class includes, for example, the $(-2,3,q)$-pretzel knots for odd $q>5$ (note that the cases $q=1,3,5$ correspond to torus knots). As an immediate consequence, the result of sufficiently positive surgery on any of the knots considered in this theorem is a hyperbolic $3$-manifold that does not admit an $\bR$-covered foliation. Indeed, the criterion applied to establish this fact may be a useful algebraic tool for obstructing such a foliation in other contexts. However, as an $\bR$-covered foliation is an instance of a taut foliation, it is interesting to note that this fact may be obtained by other means for the knots in question. We describe this further in the next section. 

\subsection{Background and further motivation} While the main focus of this work is the interplay between left-orderability and Dehn surgery, part of our motivation for studying left-orderability of fundamental groups in this context comes from Heegaard Floer homology. Let $Y$ be a closed, connected, oriented 3-manifold, and denote by $\HFhat(Y)$ the (`hat' version of) Heegaard Floer homology of $Y$ \cite{OSz2004-invariants,OSz2004-properties}. We will be particularly interested in a class of manifolds introduced in \cite{OSz2005-lens} for which the Heegaard Floer homology is a simple as possible.  

\begin{definition} A closed, connected, orientable 3-manifold $Y$ is an L-space if it is a rational homology sphere satisfying $\rk\HFhat(Y)=|H_1(Y;\bZ)|$.  \end{definition}

Examples of L-spaces include lens spaces, as well as manifolds admitting elliptic geometry \cite[Proposition 2.3]{OSz2005-lens}.

An interesting topological property enjoyed by this class of manifolds is that they do not admit taut foliations, according to a result of \ons\ \cite[Theorem 1.4]{OSz2004-genus}. Restricting to the class of Seifert fibred spaces with base orbifold $S^2(a_1,\ldots,a_n)$, a converse to this result has been established by Lisca and Stipsicz \cite[Theorem 1.1]{LS2007}. That is, non-L-spaces within this class are characterized by the existence of a taut foliation. Combined with a result of Boyer, Rolfsen and Wiest \cite[Theorem 1.3(b)]{BRW2005}, this characterization may be restated in terms of left-orderability of the fundamental group, as observed by Peters \cite{Peters2009} and Boyer, Gordon and Watson \cite{BGW2010}. In fact, with this observation as a point of departure, it has been established that a Seifert fibred space (without restriction on the base orbifold) is an L-space if and only if its fundamental group cannot be left-ordered \cite{BGW2010} (see also \cite{Watson2009}).

This suggests a correspondence between L-spaces and non-left-orderability of fundamental groups, a phenomenon that has been studied further in various settings \cite{BGW2010,CR2010,Peters2009}. In light of this correspondence, it is natural to ask whether properties enjoyed by L-spaces may be translated into statements regarding non-left-orderability of the fundamental group.  

With respect to Dehn filling, L-spaces obey the following property (see \cite[Proposition 2.1]{OSz2005-lens}). Fix a compact, connected, orientable 3-manifold $M$ with torus boundary. Given a pair of slopes $\alpha$ and $\beta$ in $\partial M$ with geometric intersection 1 and $|H_1(M(\alpha);\bZ)|+|H_1(M(\beta);\bZ)|=|H_1(M(\alpha+\beta);\bZ)|$, if $M(\alpha)$ and $M(\beta)$ are L-spaces then so is $M(\alpha+\beta)$.  In the setting of surgery on a knot in $S^3$, this property may be restated a follows: If $S^3_n(K)$ is an L-space for some integer $n>0$, then $S^3_r(K)$ is also an L-space for any rational number $r\ge n$. A knot $K$ admitting $L$-space surgeries will be referred to as an L-space knot; examples are provided by torus knots and, more generally, Berge knots -- those knots known to admit lens space surgeries \cite{Berge}. Note that, up to taking mirrors, we may always assume that the integer $n$ is positive. We will restrict ourselves to considering positive surgeries in this work.  

Given this property of $L$-spaces with respect to Dehn surgery, the correspondence between L-spaces and the non-left-orderability of fundamental groups suggests that L-space knots should yield large families of 3-manifolds with fundamental group that cannot be left-ordered. In particular, one is led to ask:

\begin{question}\label{qst:large-surgery} Is the fundamental group of the manifold obtained by sufficiently positive surgery on an L-space knot non-left-orderable? \end{question}

More generally, one might consider a manifold with torus boundary (as above) for which neither $\pi_1(M(\alpha))$ nor $\pi_1(M(\beta))$ may be left-ordered, and ask whether $\pi_1(M(\alpha+\beta))$ is non-left-orderable. However, in the context of left-orderability questions pertaining to the relationship between three such quotients seem somewhat unnatural (or, at very least, unstudied). We leave this to future work. 

This paper -- in particular Corollary \ref{crl:main-criteria} and its applications -- constitutes, in part, an attempt to better understand 
Question \ref{qst:large-surgery}. Note that the infinite family of knots considered in Theorem \ref{thm:twisted-intro} are all L-space knots.

\subsection{Organization} The remainder of the paper is organized as follows. Section \ref{sec:criteria} contains the proof of our main criterion and its corollaries. In Section \ref{sec:torus} we introduce a family of twisted torus knots, compute their fundamental groups, and fix a choice of generators for the peripheral subgroups. These serve as a class of examples to which we may apply our obstructions to left-orderable fundamental groups after Dehn filling, the focus of Section \ref{sec:examples}.  

\subsection*{Acknowledgements} The authors thank Steve Boyer and Dale Rolfsen for their encouragement and input on this work, as well as Tye Lidman for helpful comments on an earlier version of the manuscript.

\section{A criterion for obstructing left-orderability}\label{sec:criteria}

The following criterion establishes a necessary condition on peripheral elements, given a left-orderable fundamental group arising from a Dehn filling. \begin{theorem} 
\label{thm:main-criteria}
Let $\frac{p}{q}, \frac{p_0}{q_0}, \frac{p_1}{q_1} \in \mathbb{Q}^+$ be given, with $ \frac{p}{q} \in( \pqzero, \pqone)$ and $p, q, p_i, q_i >0$.   Suppose that $M$ is compact, connected, orientable $3$-manifold with incompressible torus boundary, and suppose that $\langle\mu,\lambda\rangle\cong\pi_1(\partial M)$ is not sent to $1$ under the quotient map $\pi_1(M) \rightarrow \pi_1(M(\frac{p}{q}))$. If $M(\frac{p}{q})$ is left-orderable, then there exists a left-ordering of $\pi_1(M)$ relative to which the elements $\mu^{p_0} \lambda^{q_0}$ and  $\mu^{p_1} \lambda^{q_1}$ have opposite signs.
\end{theorem}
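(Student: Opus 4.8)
The plan is to transport a left-ordering from the quotient $\pi_1(M(\pq))$ back up to $\pi_1(M)$, using the standard fact that left-orderability is preserved under group extensions, and to arrange the transported ordering so that the signs of the two peripheral elements are dictated entirely by their images in the quotient. Write $\phi\co\pi_1(M)\to\pi_1(M(\pq))$ for the quotient map and $N=\ker\phi=\langle\langle\mu^p\lambda^q\rangle\rangle$; left-orderability of $M(\pq)$ furnishes a positive cone $P$ on $Q:=\pi_1(M(\pq))$. The first thing I would isolate is the behaviour of $\phi$ on the peripheral subgroup, which is where the betweenness hypothesis on $\pq$ enters.

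Because $\partial M$ is incompressible, $\langle\mu,\lambda\rangle\cong\bZ^2$ injects into $\pi_1(M)$, and $\phi$ kills the primitive slope class $\mu^p\lambda^q=(p,q)$. Hence $\phi(\langle\mu,\lambda\rangle)$ is a quotient of $\bZ^2/\langle(p,q)\rangle\cong\bZ$; sitting inside the left-orderable group $Q$ it is torsion-free, and being nontrivial by hypothesis it must be infinite cyclic, so that $\phi$ restricts to the isomorphism $\bZ^2/\langle(p,q)\rangle\xrightarrow{\ \cong\ }\bZ$ given by $(x,y)\mapsto qx-py$. Evaluating,
\[
\mu^{p_0}\lambda^{q_0}\longmapsto qp_0-pq_0,\qquad \mu^{p_1}\lambda^{q_1}\longmapsto qp_1-pq_1,
\]
and the hypothesis $\pq\in(\pqzero,\pqone)$, together with positivity of all the integers, gives $qp_0-pq_0<0<qp_1-pq_1$. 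Thus the two images are nonzero integers of opposite sign; since the restriction of $P$ to this copy of $\bZ$ is one of its two orderings, the images of $\mu^{p_0}\lambda^{q_0}$ and $\mu^{p_1}\lambda^{q_1}$ have opposite signs in $Q$. This arithmetic step is the heart of the statement.

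It remains to lift $P$ to a left-ordering of $\pi_1(M)$ respecting these signs. I would first note that $\pi_1(M)$ is left-orderable: a torus boundary forces $b_1(M)\ge1$ by half-lives-half-dies, so $\pi_1(M)$ surjects onto $\bZ$ and Theorem \ref{thm:BRWcriterion} applies; consequently the subgroup $N$ inherits a left-ordering with positive cone $P_N$. Setting
\[
P_G=\phi^{-1}(P)\ \sqcup\ P_N,
\]
one checks that $P_G$ is a subsemigroup (the cross terms between $\phi^{-1}(P)$ and $N$ land back in $\phi^{-1}(P)$, as $N=\ker\phi$) and that $P_G\sqcup P_G^{-1}\sqcup\{1\}=\pi_1(M)$, so $P_G$ is a positive cone defining a left-ordering of $\pi_1(M)$.

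The payoff of this construction is that for any $g\notin N$ one has $g\in P_G$ if and only if $\phi(g)\in P$, so the sign of $g$ agrees with that of $\phi(g)$. As both $\mu^{p_0}\lambda^{q_0}$ and $\mu^{p_1}\lambda^{q_1}$ lie outside $N$ and have opposite signs in $Q$, they have opposite signs in this left-ordering of $\pi_1(M)$, which is exactly the conclusion. The main obstacle I anticipate is not the arithmetic but the lifting: one must secure a left-ordering of $N$ before the extension construction can be invoked, so establishing left-orderability of $\pi_1(M)$ (hence of its subgroup $N$) is the load-bearing input, after which verifying the semigroup and trichotomy properties of $P_G$ is routine but must be done carefully to confirm that signs outside $N$ are genuinely read off from $Q$.
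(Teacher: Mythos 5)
Your proposal is correct and follows essentially the same route as the paper: both arguments lift the given left-ordering of $\pi_1(M(\pq))$ through the short exact sequence $1\to\langle\langle\mu^p\lambda^q\rangle\rangle\to\pi_1(M)\to\pi_1(M(\pq))\to1$ (ordering the kernel by restricting a left-ordering of $\pi_1(M)$ obtained from Theorem \ref{thm:BRWcriterion}) so that the sign of any element outside the kernel is read off from the quotient, use the survival of the peripheral subgroup to see that its image is infinite cyclic, and finish with the same determinant computation showing that $qp_0-pq_0$ and $qp_1-pq_1$ have opposite signs. The only difference is packaging: the paper phrases the sign-reading via convexity of $\langle\mu^p\lambda^q\rangle$ in the restriction ordering on $\pi_1(\partial M)$ (Propositions \ref{prop:res} and \ref{prop:opp}), whereas you push the peripheral elements directly into the quotient — the same mechanism in different words.
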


Before turning to the proof of this result, we record some immediate corollaries that will be used in the sequel. These come in the form of sufficient conditions to conclude that certain Dehn surgeries will give rise to non-left-orderable fundamental groups. In Corollaries \ref{cor:interval} and \ref{crl:main-criteria}, $M$ denotes the exterior of a nontrivial knot in $S^3$, with canonical generators $\mu$ and $\lambda$ for the peripheral subgroup. 

\begin{corollary}
\label{cor:interval}
Let $\frac{p}{q}, \pqzero, \pqone \in \mathbb{Q}^+$ be given, with $ \frac{p}{q} \in ( \pqzero, \pqone)$ and $p, q, p_i, q_i >0$. If $\mu^{p_0} \lambda^{q_0}>1$ implies  $\mu^{p_1} \lambda^{q_1}>1$ for every left-ordering $>$ of $\pi_1(M)$, then $\pi_1(M(\frac{p}{q}))$ is not left-orderable.
\end{corollary}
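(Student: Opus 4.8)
The plan is to obtain this statement as a contrapositive application of Theorem \ref{thm:main-criteria}. Suppose, toward a contradiction, that $\pi_1(M(\pq))$ is left-orderable. Before invoking the theorem I must check its hypotheses for the exterior $M$ of a nontrivial knot $K$. Compactness, connectedness and orientability are immediate, and the boundary torus is incompressible precisely because $K$ is nontrivial (a knot exterior fails to have incompressible boundary only when it is a solid torus, i.e.\ when $K$ is unknotted). The one genuinely topological hypothesis is that the peripheral subgroup $\langle\mu,\lambda\rangle$ is not sent to $1$ under the quotient map $\pi_1(M)\to\pi_1(M(\pq))$; it suffices to show that the image $\bar\mu$ of the meridian is nontrivial. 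If instead $\bar\mu=1$, then, since $\mu$ normally generates $\pi_1(M)$, the group $\pi_1(M(\pq))$ would be trivial, forcing $M(\pq)\cong S^3$ by the Poincar\'e conjecture; but the knot complement theorem of Gordon and Luecke forbids any nontrivial surgery on a nontrivial knot from producing $S^3$, and $\pq$ is a finite positive slope. Hence $\bar\mu\ne 1$ and the peripheral subgroup survives.

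The heart of the argument is to upgrade the one-directional hypothesis into a symmetric statement. First note that each of $\mu^{p_0}\lambda^{q_0}$ and $\mu^{p_1}\lambda^{q_1}$ is a nontrivial element of $\pi_1(M)$: since $p_i,q_i>0$ and $\pi_1(\partial M)\cong\bZ^2$ injects into $\pi_1(M)$ by incompressibility, neither can be the identity, so each has a well-defined sign in any left-ordering. The key observation is that the reversed relation $\succ$, defined by $g\succ h$ if and only if $h>g$, is again a left-ordering (left-invariance is immediate), and it satisfies $x\succ 1$ if and only if $x<1$. Applying the hypothesis of the corollary to $\succ$ converts the implication $\mu^{p_0}\lambda^{q_0}>1\Rightarrow\mu^{p_1}\lambda^{q_1}>1$ into its mirror $\mu^{p_0}\lambda^{q_0}<1\Rightarrow\mu^{p_1}\lambda^{q_1}<1$. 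Together these two implications say that in \emph{every} left-ordering of $\pi_1(M)$ the elements $\mu^{p_0}\lambda^{q_0}$ and $\mu^{p_1}\lambda^{q_1}$ carry the same sign; equivalently, no left-ordering of $\pi_1(M)$ makes them have opposite signs.

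This is exactly the configuration forbidden by the conclusion of Theorem \ref{thm:main-criteria}. Having assumed $M(\pq)$ left-orderable and verified the hypotheses in the first paragraph, that theorem produces a left-ordering of $\pi_1(M)$ relative to which $\mu^{p_0}\lambda^{q_0}$ and $\mu^{p_1}\lambda^{q_1}$ have opposite signs, contradicting the second paragraph. Therefore $\pi_1(M(\pq))$ is not left-orderable. I expect the main obstacle to be the bookkeeping around the reversal trick --- specifically, making sure that \emph{both} opposite-sign cases are excluded and that the two peripheral elements are genuinely nontrivial so that ``sign'' is meaningful --- together with cleanly dispatching the topological hypothesis that the peripheral subgroup is not killed by the filling.
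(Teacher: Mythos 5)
Your proof is correct and follows essentially the same route as the paper's: assume left-orderability for contradiction, check that the peripheral subgroup survives the filling, apply Theorem \ref{thm:main-criteria}, and exclude both opposite-sign configurations using the reversed (opposite) ordering. The only cosmetic difference is your route to nontriviality of $\pi_1(M(\frac{p}{q}))$ (Poincar\'e conjecture plus Gordon--Luecke), where the paper cites Kronheimer--Mrowka or simply invokes its convention that the trivial group is not left-orderable.
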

\begin{proof}
For contradiction, suppose that $\pi_1(M(\frac{p}{q}))$ is left-orderable.  Since $M$ is the complement of a nontrivial knot, $\pi_1(M(\frac{p}{q}))$ is nontrivial \cite{KM2004} (note that we could alternatively appeal to our convention that the trivial group is not left-orderable), and hence $\pi_1(\partial M)$ is not sent to $1$ under the quotient map $\pi_1(M) \rightarrow \pi_1(M(\frac{p}{q}))$.  Thus, we may apply Theorem \ref{thm:main-criteria} to conclude that there exists a left-ordering $>$ of $\pi_1(M)$ such that $\mu^{p_0} \lambda^{q_0}$ and  $\mu^{p_1} \lambda^{q_1}$ have opposite signs.  

If $\mu^{p_0} \lambda^{q_0}>1>\mu^{p_1} \lambda^{q_1}$, this contradicts the hypothesis that $\mu^{p_0} \lambda^{q_0}>1$ implies  $\mu^{p_1} \lambda^{q_1}>1$ for every left-ordering $>$ of $\pi_1(M)$.   If  $\mu^{p_1} \lambda^{q_1}>1>\mu^{p_0} \lambda^{q_0}$, then the opposite ordering of $\pi_1(M)$ contradicts our hypothesis.
\end{proof}

\begin{corollary}\label{crl:main-criteria}
Let $\frac{p}{q}, r\in \mathbb{Q}^+$ be given, with $p, q >0$ and $r > \frac{p}{q}$.  If $\mu^p \lambda^q>1$ implies $\mu^{p+N} \lambda^q>1$ for all $N>0$, then $\pi_1(M(r))$ is not left-orderable.
\end{corollary}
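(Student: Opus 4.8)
The plan is to deduce this directly from Corollary \ref{cor:interval} by manufacturing a suitable interval $(\frac{p_0}{q_0},\frac{p_1}{q_1})$ containing the given slope $r$. Since the hypothesis controls what happens as we increase the meridional exponent while leaving the longitudinal exponent fixed, I would keep $q_1=q_0=q$ throughout and only vary the numerator.

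Concretely, set $p_0=p$ and $q_0=q$, so that $\frac{p_0}{q_0}=\frac{p}{q}<r$ by assumption. For the upper endpoint, choose a positive integer $N$ large enough that $\frac{p+N}{q}>r$; such an $N$ exists by the Archimedean property, e.g. any integer $N>rq-p$ works (and $rq-p>0$ since $r>\frac{p}{q}$). Setting $p_1=p+N$ and $q_1=q$ then gives $p_1,q_1>0$ and
\[
\frac{p_0}{q_0}=\frac{p}{q}<r<\frac{p+N}{q}=\frac{p_1}{q_1},
\]
so that $r\in(\frac{p_0}{q_0},\frac{p_1}{q_1})$ as required.

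It remains to check that the implication hypothesis of Corollary \ref{cor:interval} holds for this choice, but this is immediate: by hypothesis $\mu^p\lambda^q>1$ implies $\mu^{p+N}\lambda^q>1$ for the chosen $N>0$, which is exactly the statement that $\mu^{p_0}\lambda^{q_0}>1$ implies $\mu^{p_1}\lambda^{q_1}>1$ for every left-ordering of $\pi_1(M)$. Applying Corollary \ref{cor:interval} to the interval $(\frac{p_0}{q_0},\frac{p_1}{q_1})$ then yields that $\pi_1(M(r))$ is not left-orderable.

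I do not anticipate any genuine obstacle here, as the argument is a direct specialization of Corollary \ref{cor:interval}; the only points requiring (minimal) care are to keep the longitudinal exponent fixed at $q$ so that the two peripheral elements match the form appearing in the hypothesis, and to verify that the upper endpoint $\frac{p+N}{q}$ can be pushed strictly above $r$ by choosing $N$ sufficiently large.
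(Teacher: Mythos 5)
Your proof is correct and is essentially the paper's own argument: the paper's proof is a one-liner that chooses $N>0$ with $r \in (\frac{p}{q}, \frac{p+N}{q})$ and applies Corollary \ref{cor:interval}, which is exactly your construction with $p_0=p$, $q_0=q_1=q$, $p_1=p+N$. You have merely made explicit the Archimedean choice of $N$ and the verification that the implication hypothesis transfers, both of which the paper leaves implicit.
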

\begin{proof}
Choose $N>0$ such that $r \in ( \frac{p}{q}, \frac{p+N}{q})$, and apply Corollary \ref{cor:interval}.
\end{proof}

\subsection{The proof of Theorem \ref{thm:main-criteria}} We now collect the requisite material for the proof of our main result.

\begin{definition}
Suppose that $(G, >)$ is a left-ordered group.  A subgroup $C$ of $G$ is said to be convex relative to the left-ordering $>$ of $G$ if for every $f, h \in C$ and $g \in G$ the implication \[f>g>h \Rightarrow g \in C\] holds.\end{definition}

The following propositions and definitions are standard, see \cite[Propositions 2.1.1 -- 2.1.3]{KM1996} for proofs.   Together with the observations that follow, they establish the role of convex subgroups in the present context.

\begin{proposition}
\label{prop:convex}
Suppose that $C$ is a nontrivial subgroup of the left-ordered group $G$ with ordering $>$.  Then $C$ is convex relative to the ordering $>$ if and only if the prescription  
\[g>h \Rightarrow gC \succ hC\]
provides a well-defined left invariant ordering $\succ$ of the set of left cosets $G/C$.
\end{proposition}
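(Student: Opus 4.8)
The plan is to prove both implications, isolating the only substantive point --- that the relation $\succ$ is well defined --- and reducing it to an elementary fact about convex subsets of a totally ordered set. Throughout I will use that left multiplication by a fixed element is an order isomorphism of $(G,>)$, since the ordering $>$ is left invariant.

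For the forward direction, I would assume $C$ is convex and define $\succ$ on distinct cosets by declaring $gC \succ hC$ whenever $g > h$. First I would record that every left coset $wC$ is itself an order-convex subset of $G$: if $wc_1 > g > wc_2$ with $c_1,c_2 \in C$, then left multiplying by $w^{-1}$ gives $c_1 > w^{-1}g > c_2$, so $w^{-1}g \in C$ by convexity of $C$, and hence $g \in wC$. Next I would prove the purely order-theoretic lemma that two disjoint order-convex subsets $A,B$ of a totally ordered set are \emph{separated}, meaning that every element of $A$ lies below every element of $B$, or vice versa; this follows from a short case analysis, since if the two sets interleaved then some element of one would lie strictly between two elements of the other, contradicting the convexity of the latter together with disjointness. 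Applying this to the disjoint convex sets $gC$ and $hC$ (disjoint precisely because $gC \ne hC$) shows that the truth value of $g > h$ is independent of the chosen representatives, so $\succ$ is well defined. Totality and transitivity are then inherited directly from $>$, and left invariance is immediate: if $gC \succ hC$ then $g > h$, whence $fg > fh$ for every $f \in G$, giving $(fg)C \succ (fh)C$.

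For the converse, I would suppose the prescription yields a well-defined strict ordering $\succ$ and take $f > g > h$ with $f,h \in C$, aiming to show $g \in C$. Here $fC = hC = C$. If instead $g \notin C$, then $gC \ne C$, so applying the prescription to $f>g$ gives $C = fC \succ gC$, while applying it to $g>h$ gives $gC \succ hC = C$. Together these contradict the antisymmetry of the strict ordering $\succ$, so $g \in C$ and $C$ is convex.

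The main obstacle is the well-definedness of $\succ$, and the crux of the matter is the separation lemma combined with the observation that the cosets of a convex subgroup are again convex. It is worth emphasizing that this is exactly where left invariance of the ordering of $G$ enters: left multiplication by $w^{-1}$ is what allows one to cancel the representative and transport convexity from $C$ to the coset $wC$, and the analogous manipulation would be unavailable for right cosets. Thus the pairing of left cosets with a left-invariant ordering is essential to the argument.
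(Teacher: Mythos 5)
Your proof is correct and complete: the paper itself does not prove this proposition (it defers to Kopytov--Medvedev, Propositions 2.1.1--2.1.3), and your argument is precisely the standard one found there --- left invariance makes each coset $wC$ order-convex, disjoint convex sets in a total order are separated, which gives well-definedness, and the converse is the short contradiction with asymmetry of $\succ$. Your closing remark about why left cosets must be paired with a left-invariant ordering is also apt; nothing is missing.
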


\begin{definition}
The left-ordering $\succ$ of the set of cosets $G/C$ in Proposition \ref{prop:convex} is called the quotient ordering of $G/C$ arising from the left-ordering $>$ of $G$, and will be denoted by $>_{G/C}$.
\end{definition}

\begin{definition}Suppose that $(G, >)$ is a left-orderable group, and that $K$ is a nontrivial subgroup of $G$.  The restriction of $>$ to the subgroup $K$ will be denoted by $>_K$, and is called the restriction ordering of $K$ arising from $>$.
\end{definition}

\begin{proposition}
\label{prop:ses}
Suppose that 
\[1 \rightarrow K \stackrel{i}{\rightarrow} G \stackrel{\phi}{\rightarrow} H \rightarrow 1
\]
is a short exact sequence of nontrivial groups.  Then $K$ is convex relative to the left-ordering $>$ of $G$ if and only if $K$ and $H$ are left-orderable. Moreover, the left-ordering $>$ of $G$ is related to the left-orderings $>_K$ and $>_H$ by the following rule:  Given $g \in G$, if $\phi(g) \neq 1$, $g > 1$ if and only if $\phi(g) >_H 1$; otherwise, $\phi(g) =1$ and $g>1$ if and only if $g >_K1$.
\end{proposition}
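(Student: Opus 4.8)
The plan is to prove the two implications separately and then read the ``moreover'' clause directly off the construction used in the backward direction. Throughout I would identify $K$ with its image $i(K) \trianglelefteq G$, so that $\phi$ induces an isomorphism $G/K \cong H$, and I would work with positive cones, using the standard fact that a subset $P \subseteq G$ is the positive cone of a left-ordering if and only if $P \cdot P \subseteq P$ and $G = P \sqcup \{1\} \sqcup P^{-1}$.

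For the forward direction, suppose $>$ is a left-ordering of $G$ relative to which $K$ is convex. The restriction $>_K$ of $>$ to $K$ is immediately a left-ordering of $K$, since left-invariance is inherited by subgroups, so $K$ is left-orderable. For $H$ I would invoke Proposition \ref{prop:convex}: convexity of $K$ yields a well-defined left-invariant quotient ordering $>_{G/K}$ on the cosets $G/K$, which transports along the isomorphism $G/K \cong H$ to a left-ordering $>_H$ of $H$. Hence both $K$ and $H$ are left-orderable, and along the way one sees that the restriction and quotient orderings are exactly the $>_K$ and $>_H$ named in the statement.

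For the backward direction, suppose $K$ and $H$ carry left-orderings with positive cones $P_K = \{k \in K : k >_K 1\}$ and $P_H = \{h \in H : h >_H 1\}$. I would build a left-ordering of $G$ by prescribing its positive cone
\[
P \;=\; \phi^{-1}(P_H)\ \cup\ i(P_K),
\]
that is, declare $g > 1$ precisely when $\phi(g) \neq 1$ and $\phi(g) >_H 1$, or when $g \in K$ and $g >_K 1$, and set $a > b \iff b^{-1}a \in P$. The verification has two parts. First, $G = P \sqcup \{1\} \sqcup P^{-1}$: for $g \neq 1$ with $\phi(g) \neq 1$, totality of $>_H$ decides between $\phi(g) \in P_H$ and $\phi(g)^{-1} \in P_H$, while for $g \in K \setminus \{1\}$ totality of $>_K$ decides; the two pieces of $P$ are disjoint because every element of $i(P_K)$ lies in $\ker \phi$. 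Second, $P \cdot P \subseteq P$ follows from a short case analysis according to whether each factor lies in $\phi^{-1}(P_H)$ or in $i(P_K)$, using that $\phi$ is a homomorphism and that $P_H$ and $P_K$ are closed under multiplication; the only mildly delicate cases are the mixed ones, where the $K$-factor is killed by $\phi$, so that $\phi(ab)$ equals $\phi(a)$ or $\phi(b)$ and lands in $P_H$.

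Finally I would check convexity of $K$ together with the ``moreover'' clause. For convexity, given $f, h \in K$ with $f > g > h$, I apply $\phi$ to the elements $h^{-1}g, g^{-1}f \in P$; if $g \notin K$ then $\phi(g) \neq 1$ forces both $\phi(g) \in P_H$ and $\phi(g)^{-1} \in P_H$, contradicting that $P_H$ is a positive cone, whence $g \in K$. That the restriction of $>$ to $K$ equals $>_K$, and that the ordering it induces on $G/K \cong H$ equals $>_H$, is immediate from the definition of $P$, which simultaneously verifies the stated rule in both directions. I expect no conceptual obstacle; the main effort is purely bookkeeping, namely making the partition and multiplicative-closure checks exhaustive and using the disjointness $\phi^{-1}(P_H) \cap i(P_K) = \emptyset$ consistently throughout.
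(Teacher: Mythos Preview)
Your proof is correct and is precisely the standard argument: the paper's own proof is simply ``The proof is routine,'' and what you have written is exactly the routine verification the authors have in mind, using the positive-cone description together with Proposition~\ref{prop:convex} in the forward direction and the lexicographic construction $P = \phi^{-1}(P_H) \cup i(P_K)$ in the backward direction.
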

\begin{proof}
The proof is routine.
\end{proof}

We remark that a version of this proposition holds even if $H$ does not have a group structure.  That is, we may use a left-invariant ordering of any convex subgroup $C \subset G$ and a left-invariant ordering of the set of left cosets $G/C$ in order to create a left-invariant ordering of $G$. 

\begin{lemma}
\label{lem:opp}
Let $\frac{p}{q}, \pqzero, \pqone \in \mathbb{Q}^+$ be given, with $p, q, p_i, q_i >0$, and suppose that $\{ (p,q), (u,v) \}$ is a basis for $\mathbb{R}^2$.  If $\frac{p}{q} \in ( \pqzero, \pqone)$ and 
\begin{eqnarray*}
 (p_0, q_0) & = & c_0(p,q) + d_0(u,v), \\
   (p_1, q_1) &=&  c_1(p,q)+d_1(u,v), 
 \end{eqnarray*}
then $d_0$ and $d_1$ have opposite signs.
\end{lemma}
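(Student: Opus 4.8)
The plan is to solve the two linear systems explicitly via Cramer's rule and reduce the claim to a comparison of signs governed by the hypothesis $\frac{p_0}{q_0}<\frac{p}{q}<\frac{p_1}{q_1}$. Writing the defining equations in matrix form, each pair $(c_i,d_i)$ satisfies
\[
\begin{pmatrix} p & u \\ q & v \end{pmatrix}\begin{pmatrix} c_i \\ d_i \end{pmatrix} = \begin{pmatrix} p_i \\ q_i \end{pmatrix},
\]
and since $\{(p,q),(u,v)\}$ is a basis the coefficient matrix is invertible. Cramer's rule then gives
\[
d_i = \frac{pq_i - qp_i}{pv-qu}.
\]

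First I would observe that the denominator $pv-qu=\det\begin{pmatrix} p & u \\ q & v\end{pmatrix}$ is a fixed nonzero quantity, common to both $d_0$ and $d_1$. Consequently the sign of $d_i$ is the sign of $pv-qu$ times the sign of the numerator $pq_i-qp_i$, and since the denominator contributes the same factor to both, the product $d_0 d_1$ has the same sign as $(pq_0-qp_0)(pq_1-qp_1)$. Thus it suffices to show these two numerators have opposite signs.

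Next I would use the slope hypothesis together with positivity of all the coordinates. From $\frac{p_0}{q_0}<\frac{p}{q}$, clearing the positive denominators $q_0,q$ yields $p_0 q < p q_0$, that is $pq_0-qp_0>0$. Likewise $\frac{p}{q}<\frac{p_1}{q_1}$ gives $pq_1 < p_1 q$, that is $pq_1-qp_1<0$. Hence the numerator attached to $d_0$ is strictly positive and that attached to $d_1$ is strictly negative, so $d_0$ and $d_1$ have opposite signs; in particular both are nonzero, consistent with the strict containment of $\frac{p}{q}$ in the open interval.

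The computation is entirely routine; the only point demanding care is the bookkeeping of signs, namely keeping straight the direction of each cross-multiplication and confirming that the common denominator $pv-qu$ cancels out of the comparison. No genuine obstacle arises, since the strict inequalities rule out the degenerate case $d_i=0$ (which would force $(p_i,q_i)$ to be parallel to $(p,q)$, and hence $\frac{p_i}{q_i}=\frac{p}{q}$).
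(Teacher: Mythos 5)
Your proof is correct and takes essentially the same approach as the paper: your Cramer's rule formula $d_i = (pq_i - qp_i)/(pv-qu)$ is exactly the paper's substitution identity $p_iq - q_ip = d_i(uq-vp)$ rearranged, and both arguments conclude by observing that the two determinants $p_iq - q_ip$ have opposite signs while the basis determinant is a common nonzero factor. The only cosmetic difference is that you fix the signs of $p_0q - q_0p$ and $p_1q - q_1p$ by cross-multiplying the slope inequalities, whereas the paper invokes the geometric signed-area interpretation of these determinants.
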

\begin{proof}
Recall that the determinant $p_0q - q_0p$ is the signed area the parallelogram bounded by the vectors $(p_0, q_0)$ and $(p,q)$.  In particular, because the acute angle from $(p_0, q_0)$ to $(p,q)$ is swept out in a clockwise direction, the quantity $p_0q - q_0p$ is negative.  We rewrite:
\[p_0q - q_0p = (c_0p +d_0u)q - (c_0q + d_0v)p = d_0(uq-vp), 
\]
so the quantity $d_0(uq-vp)$ is negative.

On the other hand, the determinant $p_1q-q_1p$ is positive, and we compute that the quantity $d_1(uq-vp)$ is positive.  Therefore, $d_0$ and $d_1$ have opposite signs.
\end{proof}

\begin{proposition}
\label{prop:opp}
Let $\frac{p}{q}, \pqzero, \pqone \in \mathbb{Q}^+$ be given, with $p, q, p_i, q_i >0$.  Let $>$ be any ordering of $\mathbb{Z} \times \mathbb{Z}$ relative to which the subgroup $\langle (p, q) \rangle$ is convex.  If $\frac{p}{q} \in ( \pqzero, \pqone )$ then $(p_0, q_0)$ and $(p_1, q_1)$ have opposite signs in the ordering $>$ of $\mathbb{Z} \times \mathbb{Z}$. 
\end{proposition}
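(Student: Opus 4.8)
The plan is to reduce the sign of an integer vector in the ordering $>$ to the sign of one of its coordinates in a suitable basis, and then to read off the conclusion directly from Lemma \ref{lem:opp}. Throughout write $G = \mathbb{Z}\times\mathbb{Z}$ and $C = \langle(p,q)\rangle$.

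First I would use convexity to pin down the structure of the quotient. Since $C$ is convex with respect to $>$ and the natural sequence $1 \to C \to G \to G/C \to 1$ consists of nontrivial groups, Proposition \ref{prop:ses} shows that $G/C$ is left-orderable; in particular it is torsion-free. Because $G/C \cong \mathbb{Z}\oplus\mathbb{Z}/\gcd(p,q)$, this forces $\gcd(p,q)=1$, so that $(p,q)$ extends to a basis $\{(p,q),(u,v)\}$ of $\mathbb{Z}\times\mathbb{Z}$ and $G/C\cong\mathbb{Z}$ is generated by the image of $(u,v)$. (If one prefers, one may simply take $\gcd(p,q)=1$ as part of the standing assumption that $\frac{p}{q}$ is a slope in lowest terms.)

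Next I would express each of the two vectors in this basis. Writing $(p_i,q_i) = c_i(p,q)+d_i(u,v)$ with $c_i,d_i\in\mathbb{Z}$, the image of $(p_i,q_i)$ in $G/C\cong\mathbb{Z}$ is $d_i$ times the chosen generator. Since the containment $\frac{p}{q}\in(\pqzero,\pqone)$ is strict we have $\frac{p_i}{q_i}\neq\frac{p}{q}$, so $(p_i,q_i)\notin C$ and hence $d_i\neq0$; thus both images are nontrivial. Invoking the quotient ordering $>_{G/C}$ of Proposition \ref{prop:convex}, Proposition \ref{prop:ses} then gives that $(p_i,q_i)>1$ in $>$ if and only if the image of $(p_i,q_i)$ is positive in $>_{G/C}$. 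As $>_{G/C}$ is one of the two left-orderings of $\mathbb{Z}$, the sign of $(p_i,q_i)$ in $>$ equals $\operatorname{sign}(d_i)$ times a fixed factor (the sign of the generator) independent of $i$.

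Finally I would apply Lemma \ref{lem:opp} to the integer vectors $(p_i,q_i)$ viewed in $\mathbb{R}^2$ with the same basis $\{(p,q),(u,v)\}$, concluding that $d_0$ and $d_1$ have opposite signs. Combined with the previous paragraph, the signs of $(p_0,q_0)$ and $(p_1,q_1)$ in $>$ are therefore opposite, as desired. The main obstacle is the middle step: correctly passing from the abstract quotient ordering to the concrete sign of the coordinate $d_i$, which requires both the identification $G/C\cong\mathbb{Z}$ (hence the $\gcd$ observation) and the nonvanishing of each $d_i$. Once this dictionary between $\operatorname{sign}(d_i)$ and the sign of $(p_i,q_i)$ is in place, Lemma \ref{lem:opp} supplies the conclusion immediately.
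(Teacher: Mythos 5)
Your proof is correct and follows essentially the same route as the paper's: both pass to the quotient $\mathbb{Z} \times \mathbb{Z} \to \mathbb{Z}$ by the convex subgroup $\langle (p,q) \rangle$, use Proposition \ref{prop:ses} to read the sign of $(p_i,q_i)$ off its image $d_i$, and conclude via Lemma \ref{lem:opp}. Your additional care on two points the paper leaves implicit --- that convexity forces $\gcd(p,q)=1$ via torsion-freeness, and that the quotient ordering of $\mathbb{Z}$ may be the reversed one (handled by your fixed-sign factor) --- is a welcome refinement rather than a different argument.
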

\begin{proof}

Consider the short exact sequence 
\[ 1 \rightarrow  \langle (p, q) \rangle \stackrel{i}{\rightarrow} \mathbb{Z} \times \mathbb{Z} \stackrel{\phi}{\rightarrow} \mathbb{Z} \rightarrow 1,
\]
where the map $\phi$ is the quotient map.  Since $\frac{p}{q} \in ( \pqzero, \pqone )$, neither $ \pqzero$ nor $ \pqone$ lies in the kernel of the map $\phi$.  Thus, by  Proposition \ref{prop:ses} we must show that $\phi(p_0, q_0)$ and $\phi(p_1, q_1)$ have opposite signs in order to show that $(p_0, q_0)$ and $(p_1, q_1)$ have opposite signs relative to the left-ordering $>$ of $\mathbb{Z} \times \mathbb{Z}$.

Choose $(u,v) \in \mathbb{Z} \times \mathbb{Z}$ such that $\phi(u,v) =1$.  Writing
 \begin{eqnarray*}
 (p_0, q_0) & = & c_0(p,q) + d_0(u,v), \\
   (p_1, q_1) &=&  c_1(p,q)+d_1(u,v),
 \end{eqnarray*}
we see that $\phi(p_0, q_0) = d_0$ and $\phi(p_1, q_1) = d_1$.  From Lemma \ref{lem:opp}, we know that $d_0$ and $d_1$ have opposite signs.
\end{proof}

\begin{lemma}
\label{lem:rescon}
Let $H$ be a nontrivial subgroup of the left-ordered group $(G, >)$.  If $C$ is convex relative to the left-ordering $>$, then $C \cap H$ is convex relative to the restriction ordering $>_H$.
\end{lemma}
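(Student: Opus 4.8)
The statement to prove is Lemma~\ref{lem:rescon}: if $C$ is convex relative to the left-ordering $>$ on $G$, then $C \cap H$ is convex relative to the restriction ordering $>_H$ on the nontrivial subgroup $H$.

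\textbf{The plan.} The plan is to verify the defining implication of convexity directly. Let me unwind the definition: to show $C \cap H$ is convex in $(H, >_H)$, I must check that for every $f, h \in C \cap H$ and every $g \in H$, the implication $f >_H g >_H h \Rightarrow g \in C \cap H$ holds. First I would fix such elements $f, h \in C \cap H$ and $g \in H$ satisfying $f >_H g >_H h$. The key observation is that the restriction ordering $>_H$ is, by definition, nothing but the ambient ordering $>$ restricted to elements of $H$; so $f >_H g >_H h$ immediately gives $f > g > h$ as a chain in the ambient group $(G, >)$.

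\textbf{Key steps.} With $f > g > h$ established in $G$, the next step is to apply the convexity hypothesis on $C$. Since $f, h$ lie in $C \cap H \subseteq C$ and $g \in H \subseteq G$, the hypothesis that $C$ is convex relative to $>$ yields $g \in C$ directly from $f > g > h$. Finally, I would combine this with the fact that $g \in H$ was given at the outset, so $g \in C \cap H$. This is exactly the conclusion required, completing the verification of the convexity implication for $C \cap H$ in $(H, >_H)$.

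\textbf{The main obstacle.} Honestly, there is very little obstacle here; the lemma is a formal consequence of the definitions. The only point requiring any care is the bookkeeping about which ordering is being used: one must be careful that $>_H$ is literally the restriction of $>$, so that an inequality chain involving elements of $H$ can be read either as an $>_H$-chain in $H$ or as a $>$-chain in $G$ without any adjustment. Once that identification is made explicit, the proof is immediate. I would also remark that the hypothesis that $H$ be nontrivial is needed only so that the restriction ordering $>_H$ is defined in the sense of the earlier definitions (a left-ordering is required to be on a nontrivial group under the paper's convention), and plays no role in the convexity argument itself.
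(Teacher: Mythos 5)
Your proof is correct and is precisely the routine verification that the paper omits (its proof of Lemma~\ref{lem:rescon} reads only ``The proof is routine''): unwinding the definition, noting that $>_H$ is literally $>$ restricted to $H$, and applying convexity of $C$ in $(G,>)$ is exactly the intended argument. Your closing remark correctly identifies that the nontriviality of $H$ serves only to make the restriction ordering well-defined under the paper's conventions.
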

\begin{proof}
The proof is routine.
\end{proof}

\begin{proposition}
\label{prop:res}
Suppose that $M$ is compact, connected, orientable $3$-manifold with incompressible torus boundary, and let $\alpha$ be a slope in  $\partial M$, and suppose that $\pi_1(\partial M)$ is not sent to $1$ under the quotient map $\pi_1(M) \rightarrow \pi_1(M(\alpha))$.  If $\pi_1(M(\alpha))$ is left-orderable, then we may define a left-ordering $>$ of $\pi_1(M)$ such that $\left< \alpha \right>$ is convex relative to the restriction ordering $>_{\pi_1(\partial M)}$ of $\pi_1(\partial M) \cong \mathbb{Z} \times \mathbb{Z}$.
\end{proposition}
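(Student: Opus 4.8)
The plan is to realise $\langle\alpha\rangle$ as the trace on $\pi_1(\partial M)$ of a convex subgroup of $G:=\pi_1(M)$. Write $\bar G:=\pi_1(M(\alpha))=G/N$, where $N:=\langle\langle\alpha\rangle\rangle$ is the normal closure of $\alpha$, and $P:=\pi_1(\partial M)\cong\bZ\times\bZ$. The argument has three movements: first identify $N\cap P$, then left-order $N$, and finally assemble a left-ordering of $G$ in which $N$ is convex, so that Lemma \ref{lem:rescon} gives the conclusion.

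First I would pin down $N\cap P$. The image of $P$ in $\bar G$ is $P/(N\cap P)$, a subgroup of $\bar G$; since $\bar G$ is left-orderable it is torsion-free, hence so is this image. By hypothesis the image is nontrivial, which forbids $N\cap P$ from having finite index in $P$ (otherwise $P/(N\cap P)$ would be a nontrivial finite torsion-free group). As $\alpha\in N\cap P$ is a nontrivial class, $N\cap P$ has rank exactly one; since it contains the primitive class $\alpha$, it must equal $\langle\alpha\rangle$.

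The heart of the matter is to show that $N$ is left-orderable, and this is the step I expect to be the main obstacle, since $N$ is the fundamental group of an infinite cover and need not be finitely generated. I would prove that $N$ is locally indicable and then invoke Burns--Hale. Let $\hat M\to M$ be the cover corresponding to $N$. Assuming $M$ is irreducible with infinite fundamental group (as holds for the knot exteriors of interest), $\pi_2(M)=0$, so $\pi_2(\hat M)\cong\pi_2(M)=0$ and $\hat M$ is a noncompact, orientable, irreducible $3$-manifold by the sphere theorem. Given a nontrivial finitely generated subgroup $N'\le N$, let $\hat M'\to\hat M$ be the corresponding cover; it is again noncompact and irreducible, so Scott's Compact Core Theorem \cite{Scott1973} furnishes a compact core $C'$ with $\pi_1(C')\cong N'$, which I may take to have no $2$-sphere boundary components. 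As $\hat M'$ is connected and noncompact, $C'$ has nonempty boundary, necessarily of positive genus; half-lives-half-dies then gives $b_1(C')>0$, and hence a surjection $N'\twoheadrightarrow\bZ$. Thus every nontrivial finitely generated subgroup of $N$ surjects onto $\bZ$, so $N$ is locally indicable and in particular left-orderable.

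Finally I would assemble the pieces. Applying Proposition \ref{prop:ses} to the short exact sequence $1\to N\to G\to\bar G\to1$ of nontrivial groups, and using that both $N$ and $\bar G=\pi_1(M(\alpha))$ are left-orderable, produces a left-ordering $>$ of $G$ relative to which $N$ is convex. By Lemma \ref{lem:rescon}, $N\cap P$ is then convex in $P$ relative to the restriction ordering $>_P$, and since $N\cap P=\langle\alpha\rangle$ this is precisely the desired statement.
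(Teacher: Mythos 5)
Your argument is correct, and its skeleton---identify $N\cap P$ where $N=\langle\langle\alpha\rangle\rangle$ and $P=\pi_1(\partial M)$, left-order $N$, then combine Proposition \ref{prop:ses} with Lemma \ref{lem:rescon}---is the same as the paper's. The genuine difference is in the step you call ``the heart of the matter.'' The paper left-orders $N$ in one line, by an observation you appear to have overlooked: left-orderability passes to arbitrary nontrivial subgroups simply by restricting the ordering, with no finite-generation hypothesis whatsoever. Since $M$ has torus boundary, half-lives-half-dies gives $b_1(M)>0$, so $\pi_1(M)$ surjects onto $\bZ$ and is left-orderable by Theorem \ref{thm:BRWcriterion}; hence its subgroup $N$ is left-orderable, and your worry about $N$ not being finitely generated never arises. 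Your substitute---pass to the cover $\hat M$ with $\pi_1(\hat M)\cong N$, take Scott compact cores of the covers corresponding to nontrivial finitely generated subgroups, apply half-lives-half-dies there, and conclude local indicability via Burns--Hale---is essentially a re-derivation of the Boyer--Rolfsen--Wiest theorem inside the cover. It is correct modulo standard facts (notably that irreducibility lifts to covers, which is Meeks--Simon--Yau rather than the sphere theorem alone, and that $\hat M$ is noncompact because the left-orderable nontrivial group $\pi_1(M(\alpha))$ is infinite), and it has the virtue of making the topology explicit; but it is far heavier than needed, and it forces you to assume irreducibility and infinite fundamental group explicitly---though to be fair, the paper's appeal to Theorem \ref{thm:BRWcriterion} carries the same irreducibility hypothesis, so that caveat is shared rather than a defect of your proof alone. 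Finally, your identification $N\cap P=\langle\alpha\rangle$ via torsion-freeness of $\pi_1(M(\alpha))$ is a clean alternative to the paper's: there, convexity of $N\cap P$ is established first, and one uses that convex subgroups are root-closed (so a rank-two convex subgroup of $\bZ\times\bZ$ would be everything, contradicting the hypothesis that $P$ survives in the quotient) to rule out rank two, with primitivity of $\alpha$ finishing the identification exactly as in your version.
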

\begin{proof}
Suppose that $\pi_1(M(\alpha))$ is left-orderable, and denote by $\left< \left< \alpha \right> \right>$ the normal closure of $\alpha $ in $\pi_1(M)$.  In particular, $\left< \left< \alpha \right> \right>$ is a left-orderable group, because it is a subgroup of $\pi_1(M)$, which is left-orderable by Theorem \ref{thm:BRWcriterion} (see \cite[Theorem 1.1]{BRW2005}).  By Proposition \ref{prop:ses}, we may use the short exact sequence
\[ 1 \rightarrow \left< \left< \alpha \right> \right> \rightarrow \pi_1(M) \rightarrow \pi_1(M(\alpha))\rightarrow 1
\]
to create a left-ordering $>$ of $\pi_1(M)$ relative to which $\left< \left< \alpha \right> \right>$ is a convex subgroup.

A nontrivial subgroup $C$ of $ \mathbb{Z} \times \mathbb{Z}$ is either isomorphic to a copy of the integers or it is of rank two.  In the case that $C$ is convex and of rank two, $C \cong \mathbb{Z} \times \mathbb{Z}$ is the whole group.  This follows from the observation that for convex groups, if $g^k\in C$ for some $k$ then $g\in C$. Thus, if $u$ and $v$ generate $C$ and $g$ is any other element of $ \mathbb{Z} \times \mathbb{Z}$, we may write $au+bv=kg$ for some integers $a,b$, hence $g$ is in $C$ and $C=\bZ\times\bZ$.   

We apply these observations to the subgroup $\left< \left< \alpha \right> \right> \cap \pi_1(\partial M)$, which is convex in $\pi_1(\partial M)$ relative to the left-ordering $<_{\pi_1(\partial M)}$ by Lemma \ref{lem:rescon}. In our present setting, since $\pi_1(\partial M)$ is not sent to $1$ under the quotient map $\pi_1(M) \rightarrow \pi_1(M(\alpha))$, we conclude that $\left< \left< \alpha \right> \right> \cap \pi_1(\partial M)$ must be isomorphic to a copy of the integers.  Since $\alpha$ is primitive, 
 \[\left< \left< \alpha \right> \right> \cap \pi_1(\partial M) = \left< \alpha \right>\]
and this subgroup is convex relative to the ordering $>_{\pi_1(\partial M)}$ of $ \pi_1(\partial M) \cong \mathbb{Z} \times \mathbb{Z}$.
\end{proof}

\begin{proof}[Proof of Theorem \ref{thm:main-criteria}]
With the above results in place, we are now in a position to prove our main criterion. Recall that $M$ is compact, connected, orientable $3$-manifold with incompressible torus boundary. By Proposition \ref{prop:res}, we may create a left-ordering $>$ of $\pi_1(M)$ such that $\left< \mu^p \lambda^q \right>$ is convex relative to the restriction ordering  $>_{\pi_1(\partial M)}$ of $\pi_1(\partial M)$.  By Lemma \ref{prop:opp}, the elements  $\mu^{p_0} \lambda^{q_0}$ and  $\mu^{p_1} \lambda^{q_1}$ have opposite signs in the restriction ordering  $>_{\pi_1(\partial M)}$, and hence they must have opposite signs in the ordering $>$ of $\pi_1(M)$. This completes the proof. 
\end{proof}

\subsection{A remark on bi-orderability}
Recall that a bi-ordering of a group is a left-ordering that is also invariant under right multiplication.  Work of Clay and Rolfsen has shown that the knot group of an L-space knot cannot be bi-ordered \cite{CR2010}. In light of Question \ref{qst:large-surgery} we have the following proposition, consistent with this observation. 

\begin{theorem}\label{thm:bi-order} If the group $\pi_1(S^3\smallsetminus\nu(K))$ is bi-orderable, then the hypothesis of Corollary \ref{cor:interval} (end hence that of Corollary \ref{crl:main-criteria} also) does not hold. That is, if $x, y \in \pi_1(S^3\smallsetminus\nu(K))$ are any two elements such that $\langle x \rangle \cap \langle y \rangle = \emptyset$, then there exists a left-ordering of $G$ such that $y>1>x$.\end{theorem}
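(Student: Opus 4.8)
The plan is to start from a bi-ordering $<$ of $G=\pi_1(S^3\smallsetminus\nu(K))$ with positive cone $P$ and, by performing surgery on its convex structure, to manufacture a left-ordering in which $x$ and $y$ acquire the prescribed opposite signs. First I would dispose of the trivialities. A bi-orderable group is torsion-free, so the hypothesis $\langle x\rangle\cap\langle y\rangle=\{1\}$ forces the map $\bZ^2\to G$, $(m,n)\mapsto x^my^n$, to be injective; that is, $\langle x,y\rangle\cong\bZ^2$, and in particular $y\notin\langle x\rangle$. Since the reverse of a bi-ordering is again a bi-ordering, after possibly reversing $<$ I may assume $y>1$. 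If also $x<1$ the ordering $<$ itself witnesses $y>1>x$ and we are finished; the real content is the case $x>1$, where the goal becomes to produce a left-ordering that reverses the sign of $x$ while keeping $y$ positive.

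The engine for this is the convex-subgroup machinery of Proposition \ref{prop:convex} together with the reconstruction principle recorded in the remark following Proposition \ref{prop:ses}: if $C\le G$ is convex for a left-ordering, one may replace the ordering on $C$ by any left-ordering of $C$ and recombine it with the unchanged coset ordering of $G/C$, and in the resulting left-ordering of $G$ the sign of an element outside $C$ is dictated solely by its coset. The convex subgroups of $(G,<)$ form a chain, cutting $G$ into archimedean jumps. If $x$ and $y$ lie in \emph{distinct} jumps, then one sits strictly below the other; reversing the coset ordering of the single jump that contains $x$ (keeping everything below it and above it fixed) is a legitimate recombination that flips the sign of $x$ and leaves the sign of $y$ untouched. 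Thus the entire problem reduces to the case in which $x$ and $y$ lie in the \emph{same} jump.

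The remaining, essential case is $x,y$ in a common jump $A=C/C^-$, where $C$ is the smallest $<$-convex subgroup containing $x$. By H\"older's theorem the archimedean group $A$ is abelian and order-embeds in $(\bR,+)$, with both images $\bar x,\bar y$ positive; hence no reordering that keeps $C^-$ convex can separate them, and one is forced to build a left-ordering of $C$ whose convex series differs from that of $<$. Here the hypothesis $\langle x,y\rangle\cong\bZ^2$ is decisive: it excludes the degenerate possibility $C\cong\bZ$ (two nontrivial elements of a rank-one group always have a common power), and it supplies a linear functional $\psi$ on the rank-two lattice $\langle x,y\rangle$ with $\psi(x)<0<\psi(y)$, hence a left-ordering $\prec$ of $\langle x,y\rangle$ with $x\prec 1\prec y$. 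The remaining task is to propagate $\prec$ to a left-ordering of $C$, and thence of $G$ by recombination across $C$.

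I expect this propagation to be the main obstacle. When $C$ is abelian it is routine — a linear ordering of a subgroup of a torsion-free abelian group extends to the whole group via the induced ordering on $C\otimes\bQ$ — and this already covers the situation of primary interest, in which $x$ and $y$ are peripheral and so lie in the abelian subgroup $\pi_1(\partial M)\cong\bZ^2$. The genuine difficulty is the nonabelian same-jump case: extending the separating ordering of $\langle x,y\rangle\cong\bZ^2$ to a left-ordering of a nonabelian $C$ is not automatic, and it is precisely here that bi-orderability of $G$ must be used in full — for example by establishing that $\langle x,y\rangle$ is relatively convex, or by a direct cone-extension argument certifying that the sub-semigroup generated by the $\prec$-positive cone of $\langle x,y\rangle$ together with $P\smallsetminus C$ omits the identity. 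Verifying this non-degeneracy, exploiting the conjugation-invariance of $P$ and the $\bZ^2$-structure forced by the hypothesis, is the crux of the argument.
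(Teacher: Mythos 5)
There is a genuine gap, and it occurs at the very first step of your reduction. You claim that torsion-freeness together with $\langle x\rangle\cap\langle y\rangle=\{1\}$ forces the map $(m,n)\mapsto x^my^n$ to be an injective homomorphism, so that $\langle x,y\rangle\cong\bZ^2$. This is false: that map is not even a homomorphism unless $x$ and $y$ commute, and nothing in the hypothesis makes them commute. The theorem quantifies over \emph{any} two elements of $\pi_1(S^3\smallsetminus\nu(K))$ with trivially intersecting cyclic subgroups; a nontrivial knot group is nonabelian (and contains nonabelian free subgroups), so there are many noncommuting pairs $x,y$ satisfying the hypothesis, for which $\langle x,y\rangle$ is far from $\bZ^2$. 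Since every later step of your plan leans on this ``$\bZ^2$-structure'' -- the separating functional $\psi$, the exclusion of the rank-one case $C\cong\bZ$, the proposed cone-extension -- the argument as designed addresses only the special case of commuting $x,y$ (the case relevant to Corollary \ref{cor:interval}), not the statement being proved. Moreover, even within your framework the proof is incomplete: at what you yourself identify as the crux -- propagating the separating ordering of $\langle x,y\rangle$ to a left-ordering of the convex subgroup $C$, in particular in the nonabelian same-jump case -- you only list candidate strategies without carrying any of them out. (A smaller inaccuracy: within a common archimedean jump it is not true that no reordering keeping $C^-$ convex can separate $x$ from $y$; if their images in $C/C^-\hookrightarrow\bR$ are rationally independent, one can reorder that abelian quotient to separate them.)

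For contrast, the paper's proof never analyzes the convex structure of a given bi-ordering. It writes $x=x_0^k$ with $x_0$ primitive and $k>0$ (every element of a knot group is a power of a primitive element \cite{Jaco1975}), and then invokes the result of \cite{ALR2008} that in a bi-orderable group one may choose a left-ordering in which the primitive element $x_0^{-1}$ is the \emph{smallest positive element}. This makes $\langle x_0\rangle$ the minimal convex subgroup and makes $x$ negative; if $y$ is positive in this ordering one is done, and otherwise one keeps the ordering on $\langle x_0\rangle$ but reverses the induced ordering on the cosets $G/\langle x_0\rangle$, which makes $y$ positive (legitimately, since $y\notin\langle x_0\rangle$ by the disjointness hypothesis) while leaving $x$ negative. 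That citation is exactly the ingredient your plan lacks: rather than performing surgery on the convex series of an arbitrary bi-ordering -- which runs into the same-jump obstruction you could not resolve -- one manufactures from bi-orderability a left-ordering whose minimal convex subgroup is prescribed in advance.
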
 

\begin{proof}
Recall that an element $g$ in a group $G$ is primitive if it cannot be written as $g=h^k$ for some $h \in G$ with $k>1$.  Every element in a knot group $\pi_1(S^3\smallsetminus\nu(K))$ may be written as a power of some primitive element \cite{Jaco1975}, and so we assume that $x = x_0^k$ where $x_0$ is primitive and $k>0$. 

Since $\pi_1(S^3\smallsetminus\nu(K))$ is bi-orderable and $x_0$ is primitive, there exists a left-ordering $>$ of $\pi_1(S^3\smallsetminus\nu(K))$ such that $x_0^{-1}$ is the smallest positive element \cite{ALR2008}.  If $>$ also satisfies $y>1$, then theorem is established.

If $1>y$, note that the subgroup $\langle x_0 \rangle$ is convex relative to the left-ordering $>$, and thus $>$ descends to a left-invariant ordering $\succ$ of the cosets $\pi_1(S^3\smallsetminus\nu(K))/\langle x_0 \rangle$.  Moreover, the coset $y\langle x_0 \rangle$ is different from $\langle x_0 \rangle$, since we are assuming $\langle x \rangle \cap \langle y \rangle = \emptyset$.  We may then define a left-ordering $>'$ of $\pi_1(S^3\smallsetminus\nu(K))$ relative to which $y$ is positive and $x$ is negative by reversing the ordering of the cosets, as in the following definition:

Given $g \in G$, if $g \notin \langle x_0 \rangle$, declare $g >' 1$ if and only if $1\succ g\langle x_0 \rangle $; otherwise, $g \in \langle x_0 \rangle$ and $g>1$ if and only if $g = x_0^k$ for some $k<0$ (c.f. Proposition \ref{prop:ses}).  It is easy to check that $>'$ so defined provides a left-ordering satisfying $y>'1>'x$, and the theorem follows.
\end{proof}

\section{Torus knots and related constructions}\label{sec:torus}

\subsection{Conventions for torus knots and their fundamental groups}

Denote the $(p,q)$-torus knot by $T_{p,q}$, where $p$ and $q$ are relatively prime, positive integers. The knot $T_{3,5}$, for example, is shown in Figure \ref{fig:torus-knot}. 

\begin{figure}[ht!]
\labellist \small
	\pinlabel $b$ at 194 428 
	\pinlabel $a$ at 282 400
	\endlabellist
\includegraphics[scale=1.0]{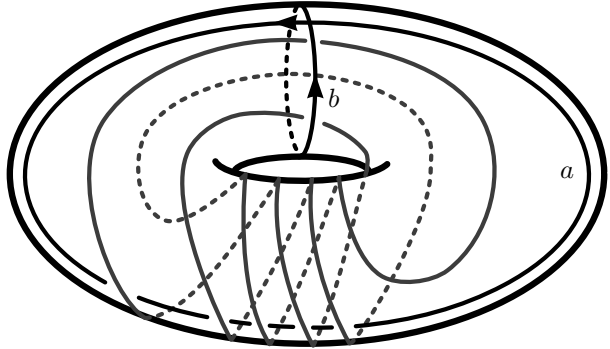}
\caption{The torus knot $T_{3,5}$, labelled with standard generators $a$ and $b$ for the knot group.}\label{fig:torus-knot}
\end{figure}

The knot group of the $T_{p,q}$ torus knot is given by the presentation
$G_{p,q} = \langle a, b | a^p = b^q \rangle$,
with generators $a$ and $b$ represented by the curves depicted in Figure \ref{fig:torus-knot}. We recall that one may arrive at this presentation immediately from an application of the Seifert-Van Kampen Theorem applied to the obvious genus 1 Heegaard decomposition of $S^3$, where $a$ and $b$ are generators the fundamental groups of the respective handlebodies. This point of view will be essential in what follows.  
  
Denote the meridian and longitude curves by $\mu$ and $\lambda$.  Relative to the generators $a, b$ we may write $\mu = b^j a^i$, where $i, j$ are integers satisfying $pj + qi = 1$, we may assume that $p>i>0$ and $0>j>-q$. Observe that by using the relation $a^p = b^q$, we may rewrite the meridian as $\mu = b^{q+j}a^{i-p}$. The longitude is given by $\lambda = \mu^{-pq} a^p=\mu^{-pq}b^q$, since $a^p$ (equivalently, $b^q$) specify the surface framing of the torus knot (see Moser \cite{Moser1971} for details).

These choices specify generators $b^ja^i$ and $a^p$ for the peripheral subgroup up to conjugacy.

\subsection{Preliminaries on twisted torus knots}\label{sub:twisted}  We begin by recording some facts about twisted torus knots. We will focus on the positively twisted $(3,q)$-torus knots, denoted $T_{3,q}^m$ where $m \geq 0$ denotes the number of (positive) full twists added along a pair of strands.  Further, we restrict our attention to the case when $q$ is congruent to $2$ modulo $3$ to streamline the discussion; the case when $q$ is congruent to 1 modulo 3 is similar.  This family of knots may be constructed by adding a second handle to the the standard splitting torus for the knot $T_{3,q}$, allowing two of the three strands to pass over the new handle, and finally adding $m$ positive full twists to the new handle. See Figure \ref{fig:torus-twisted} for the case $T_{3,5}^m$.

\begin{figure}[ht!]
\labellist \small
	\pinlabel $\cdots$ at 223 487	
	\pinlabel {$m$ full twists $\rightarrow$}  at 150 487
\endlabellist
\includegraphics[scale=1.0]{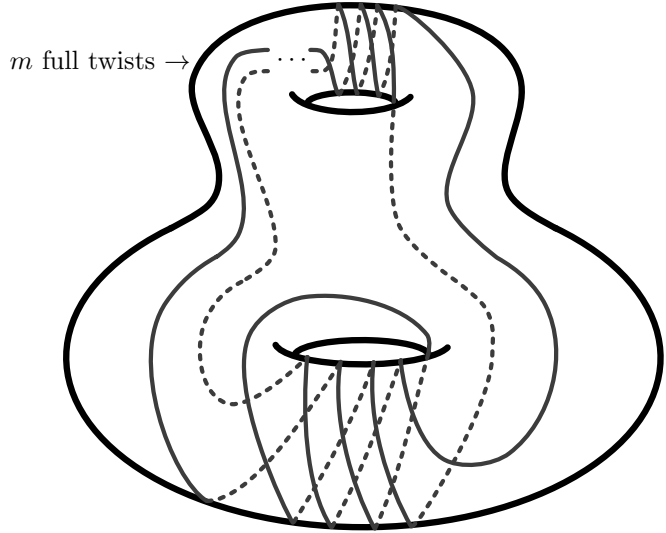}
\caption{The addition of a second handle to obtain the family of twisted torus knots $T^m_{3,5}$, where $m$ records the number of full twists.}\label{fig:torus-twisted}
\end{figure}

This is a simple construction giving rise to many familiar knot types. For example, it is an easy exercise to show that $T_{3,5}^m$ is the $(-2,3,5+2m)$-pretzel knot, denoted $P(-2,3,5+2m)$. This class pretzel knots provides an infinite family of hyperbolic L-space knots \cite{OSz2005-lens}, and includes, for example, the Berge knot $P(-2,3,7)$. 

\subsection{Fundamental groups of twisted torus knots}
We will use a genus two Heegaard decomposition of $S^3$ to compute the knot group of $T_{3,q}^m$,  which we will denote as $G^m_{3,q}=\pi^m_q$.  The computation follows the classical application of the Seifert-Van Kampen Theorem as in the case of the torus knots.  We restrict ourselves to the case $q=3k+2$ for $k\ge0$.

\begin{proposition}\label{prp:twisted-group-calculation}
Suppose that $m\ge0$ and $q = 3k+2$ for $k\ge0$.  Then the fundamental group of $S^3 \smallsetminus\nu(T_{3,q}^m)$ is
\[
\pi_q^m = \langle a,b |  a^2(b^{-k}a)^ma=b^{2k+1}(b^{-k}a)^mb^{k+1} \rangle. 
\] In particular, when $m=0$ we recover the torus knot group $G_{3,3k+2}$.
\end{proposition}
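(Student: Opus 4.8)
The plan is to mimic exactly the genus one computation for torus knots, now using a genus two Heegaard splitting $S^3 = H_1 \cup_\Sigma H_2$ adapted to the construction of $T_{3,q}^m$. Recall the template: for $T_{3,q}$ one writes the exterior as $(V_1 \smallsetminus \nu(K)) \cup (V_2 \smallsetminus \nu(K))$, glued along the annulus $T \smallsetminus \nu(K)$; since each $V_i \smallsetminus \nu(K)$ retracts onto the solid torus $V_i$ with group $\langle a \rangle$, respectively $\langle b \rangle$, and the core of the annulus (a curve parallel to $K$) reads as $a^3$ on one side and $b^q$ on the other, Seifert--Van Kampen returns $\langle a, b \mid a^3 = b^q \rangle$. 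I would reproduce this with the surface $\Sigma$ obtained from the Heegaard torus $T$ of $T_{3,q}$ by attaching the tube over which two of the three strands are routed, so that $K = T_{3,q}^m$ lies on $\Sigma$ and the exterior again decomposes into two handlebody pieces glued along a copy of $K$.

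First I would fix generators. Writing $H_1 = V_1 \cup \nu(\tau)$ for the handlebody obtained by attaching the tunnel $\tau$ to the solid torus $V_1$, its group is free on the old core $a$ together with a loop $c$ running once over the new handle; the complementary handlebody $H_2 = V_2 \smallsetminus \nu(\tau)$ is free on $b$ together with a dual loop. The meridian disk of the tube, and the disk the tube is drilled from, each contribute an identification among these generators, and the point of the bookkeeping is to show that the handle loop $c$ is carried to the word $b^{-k}a$ in the ambient group. This is exactly where the hypothesis $q = 3k+2$ enters: the longitudinal wrapping $b^q$ of $K$ must split across the attaching region into integral blocks, and the identity $(2k+1) + (k+1) = q$ records the split of this $b$-wrapping on either side of the tube. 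The case $q \equiv 1 \pmod 3$ produces the analogous but numerically different split alluded to in the text.

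With the generators fixed, the main step is to read the single relation by tracing $K$ once around $\Sigma$. Away from the tube the curve is the $T_{3,q}$ curve, contributing $a^3$ on the $H_1$ side and $b^q$ on the $H_2$ side as before; but two of the three strands now detour over the $m$-times twisted handle. Tracking this detour shows that the relevant portion of the word is interrupted by $c^m = (b^{-k}a)^m$, splitting $a^3$ as $a^2 (b^{-k}a)^m a$ and $b^q$ as $b^{2k+1}(b^{-k}a)^m b^{k+1}$, according to which strands pass over the tube and where the $m$ full twists sit. Equating these two readings of the single parallel copy of $K$ and eliminating the redundant handle generator yields
\[
a^2(b^{-k}a)^m a = b^{2k+1}(b^{-k}a)^m b^{k+1},
\]
which is the asserted presentation of $\pi_q^m$. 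Setting $m = 0$ collapses the inserted blocks and recovers $a^3 = b^{3k+2} = b^q$, i.e. $G_{3,3k+2}$, which serves as a consistency check.

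The genuine obstacle is the second step: correctly reading the word traced by $K$, including the sign and placement of the $m$ full twists and the identification of the handle generator with $b^{-k}a$. This demands a careful and consistent choice of basepoint, orientations, and meridian systems for the two handlebodies, together with an honest account of how the two diverted strands interact with the twist region; the exponents $2k+1$ and $k+1$ are \emph{forced} by this geometry and must be derived rather than guessed. The Van Kampen assembly itself, and the verification that $\Sigma \smallsetminus \nu(K)$ glues the two free groups with the single expected relation, is then routine, exactly as in the torus knot case.
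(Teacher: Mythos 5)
Your proposal takes essentially the same route as the paper: a genus two Heegaard splitting with $T^m_{3,q}$ lying on the splitting surface, Seifert--Van Kampen applied to the handlebody groups $\langle a,c\rangle$ and $\langle b,d\rangle$ amalgamated over the complement of the knot in the splitting surface, and elimination of the auxiliary handle generators (the paper reads $c=d^m$ and $a=b^kd$ from two of the three surface curves, and the main relation $a^2ca=b^{2k+1}d^mb^{k+1}$ from the third), which is exactly your splitting of $a^3$ and $b^q$ across the tube with the inserted block $(b^{-k}a)^m$. The geometric bookkeeping you flag as the genuine obstacle --- the identification of the handle word with $b^{-k}a$ and the forced exponents $2k+1$ and $k+1$ --- is precisely what the paper carries out by reading the three labelled curves off Figure \ref{fig:vankampen}, so your outline agrees with the paper's proof in both structure and substance.
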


\begin{proof} We begin by fixing notation. Let $S^3=U\cup_\Sigma V$ be the genus two Heegaard splitting of $S^3$ specified in Figure \ref{fig:vankampen}, so that $\pi_1(\Sigma)$ is generated by $a,b,c,d$ and $\pi_1(U)$ and $\pi_1(V)$ are the free groups $\langle a,c \rangle$ and $\langle b,d \rangle$ respectively.   We will use the Seifert-Van Kampen Theorem to express the knot group $\pi_q^m$ as a free product of $\pi_1(U)$ and $\pi_1(V)$ with amalgamation.

\begin{figure}[ht!]
\labellist \small
	\pinlabel $b$ at 317 390
	\pinlabel $a$ at 222 390
	\pinlabel $d$ at 317 438
	\pinlabel $c$ at 222 438
\endlabellist
\includegraphics[scale=1.0]{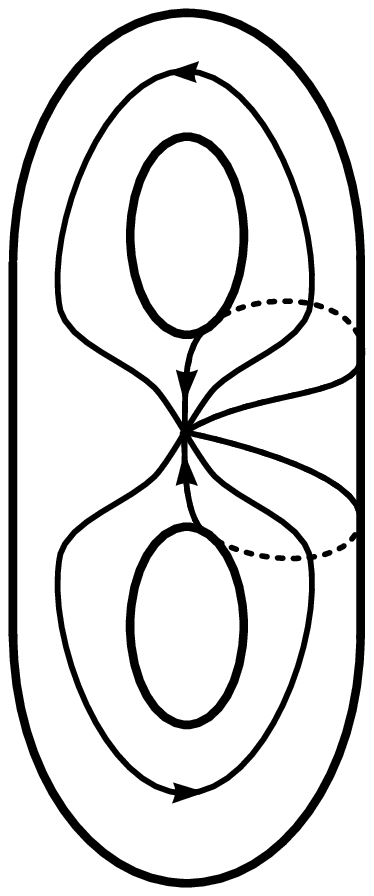}\quad\quad
\labellist \small
	\pinlabel \rotatebox{90}{$\underbrace{\phantom{aaaaa}}$} at 239 334
	\pinlabel $k$ at 248 334
	\pinlabel \rotatebox{-90}{$\underbrace{\phantom{aaaaa}}$} at 261 448
	\pinlabel $m$ at 250 448
\endlabellist
\includegraphics[scale=1.0]{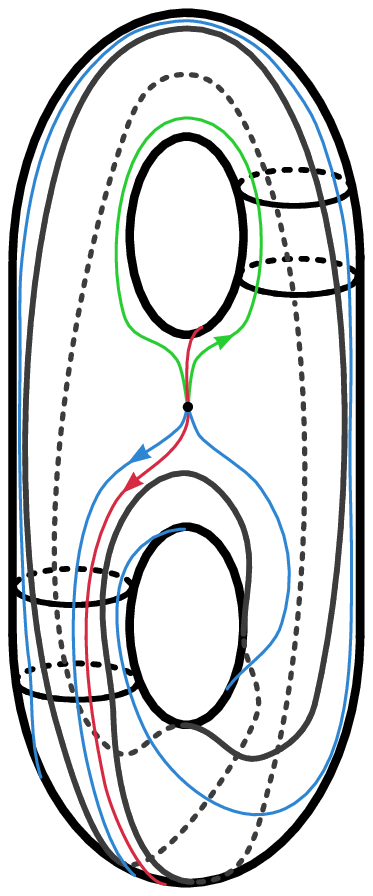}\quad\quad
\labellist \small
	\pinlabel \rotatebox{90}{$\underbrace{\phantom{aaaaa}}$} at 206 367
	\pinlabel $k$ at 215 367
	\pinlabel \rotatebox{-90}{$\underbrace{\phantom{aaaaa}}$} at 228 481
	\pinlabel $m$ at 217 481
	\pinlabel $\bar{\mu}$ at 207 399
	\pinlabel $\bar{s}$ at 193 419
\endlabellist
\includegraphics[scale=1.0]{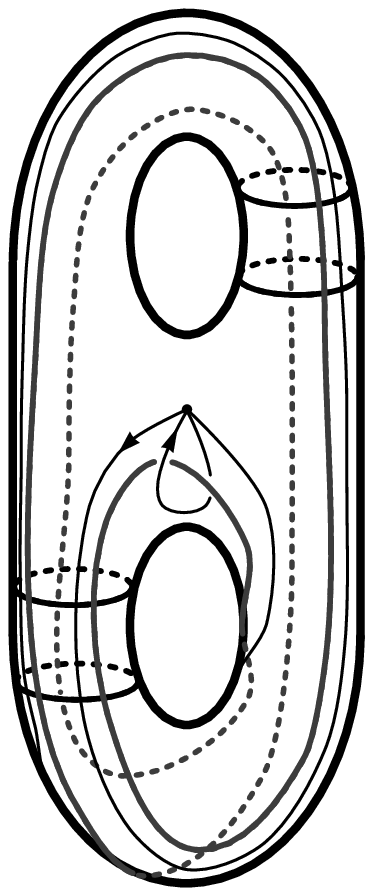}
\caption{Generators for the genus two splitting surface $\Sigma$ (left), a generating set for the fundamental group of $\Sigma\smallsetminus \nu( T^m_{3,q})$ (centre), and generators $\bar{\mu}$ and $\bar{s}$ for a peripheral subgroup (right). Note that the surface framing $\bar{s}$ is obtained by tracing the knot to give $\bar{s}=acaca$ where $c=(b^{-k}a)^m$.}\label{fig:vankampen}
\end{figure}

Therefore, we must first determine the image of the generators of $\pi_1(\Sigma\smallsetminus \nu (T^m_{3,q}))$ in each of the groups $\pi_1(U)$ and $\pi_1(V)$.  The generators in each case are represented by the oriented blue, green and red curves illustrated in Figure \ref{fig:vankampen}.

Consulting Figure \ref{fig:vankampen}, we see that the generator represented by the green curve may be written as $c$ in terms of the generators of $\pi_1(U)$, or as $d^m$ in terms of the generators of $\pi_1(V)$, and so we have the relation $c=d^m$.  From the red generator, we read off $a=(b^k)d$, and from the blue generator, $a^2ca=(b^k)b(b^k)d^m(b^k)b$. Since $d=b^{-k}a$ and $c=(b^{-k}a)^m$, we calculate \[\pi_q^m=\langle a,b |  a^2(b^{-k}a)^ma=b^{2k+1}(b^{-k}a)^mb^{k+1}\rangle \] where $q=3k+2$, so that $m$ and $k$ both record (positive) full twists on two and three strands, respectively. 
\end{proof}

\subsection{Determining the peripheral subgroup}  The peripheral subgroup of $\pi_q^m$ may be generated by the knot meridian and the surface framing of the knot, represented by those curves illustrated in Figure \ref{fig:vankampen}. Denote these elements by $\bar{\mu}$ and $\bar{s}$ respectively, in this section we will compute $\bar{\mu}$ and $\bar{s}$ in terms of the generators $a$ and $b$.  This done, we will fix a choice of peripheral subgroup generated by $\mu = a \bar{\mu} a^{-1}$ and $s = a \bar{s}a^{-1}$, as these generators are easier to work with in Section \ref{sec:examples}.

First notice that it is immediate from Figure \ref{fig:vankampen} that $\bar{s}=acaca$, where $c=(b^{-k}a)^m$ as in the previous section.

Next, we turn to determining $\bar{\mu}$ in terms of the generators $a$ and $b$.
To this end, we consider the portion of the knot that lies in the lower handle of the handlebody depicted in Figure \ref{fig:vankampen}.  This portion of the knot gives us a $3$-braid as in Figure \ref{fig:step1}, the generators $a$ and $b$ are as shown.   Recall that $\Delta^2$ is the full twist on three strands, which generates the cyclic centre of $B_3$.

\begin{figure}[ht!]\labellist \small
	\pinlabel $a$ at 171 455
	\pinlabel $b$ at 454 455
	\pinlabel $\Delta^{2k}$ at 222 415  
\endlabellist
\includegraphics[scale=0.75]{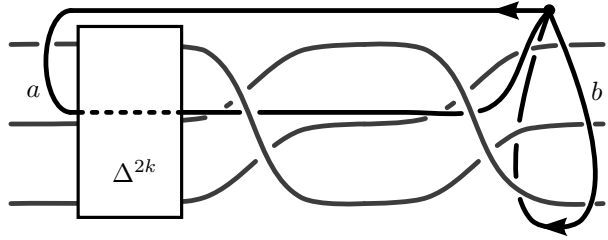}
\caption{Generators $a$ and $b$ for the knot group of the $(3,3k+2)$-torus knot. Recall that these generators correspond to one of the two handles corresponding to the Heegaard splitting for the knot $T^m_{3,3k+2}$.}\label{fig:step1}\end{figure}

 We first conjugate $\bar{\mu}$ by the generator $a$, this isomorphism by conjugation has the effect of moving $\bar{\mu}$ linking the top strand to $\mu$ linking the bottom strand of the associated 3-braid (see Figure \ref{fig:step2}).  

\begin{figure}[ht!]\labellist \small
	\pinlabel $\Delta^{2k}$ at 192 415
	\pinlabel $\mu=a\bar{\mu}a^{-1}$ at 195 375 
	\pinlabel $\bar{\mu}$ at 420 455 
\endlabellist
\includegraphics[scale=0.75]{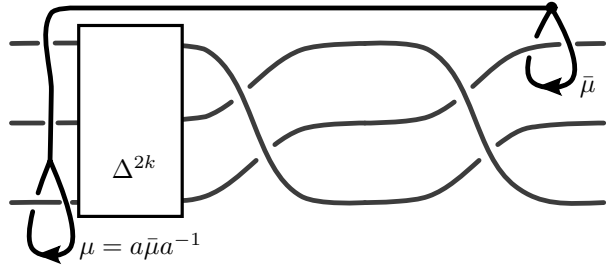}
\caption{Conjugation by $a$ relating $\mu$ and $\bar{\mu}$.}\label{fig:step2}\end{figure}

We proceed to deduce a formula describing $\mu = a \bar{\mu}a^{-1}$ by induction on the number of full twists, $k$.  Considering the case $k=0$, Figure \ref{fig:meridian} indicates a homotopy between $ba^{-1}$ and the meridian $\mu$.

\begin{figure}[ht!]\labellist \small
	\pinlabel $\mu=ba^{-1}$ at 255 377 
\endlabellist
\includegraphics[scale=0.75]{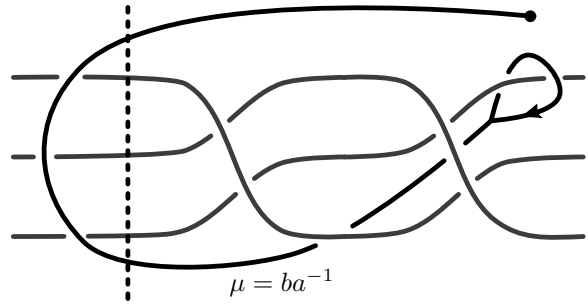}
\caption{Homotopy between the peripheral element $\mu$ and the word $ba^{-1}$ in the case $k=0$.}\label{fig:meridian}\end{figure}

With this case understood, we turn to adding full twists (as indexed by $k$ in Figure \ref{fig:vankampen}).  Adding a copy of the full twist $\Delta^2$ to our knot is accomplished by inserting the twist, appearing in Figure \ref{fig:full-twist}, at the dashed line in the base case illustrated in Figure \ref{fig:meridian}.  Note that the generator $b$, as it appears in Figure \ref{fig:full-twist}, is homotopic to the core of the handle.  This observation allows us to see that for each copy of $\Delta^2$ added to our knot, our meridian $\mu$ (linking the left bottom strand) is modified by prefixing the base case $\mu = ba^{-1}$ with a copy of the generator $b$.

\begin{figure}[ht!]\labellist \small
	\pinlabel $b$ at 185 391 
\endlabellist
\includegraphics[scale=0.75]{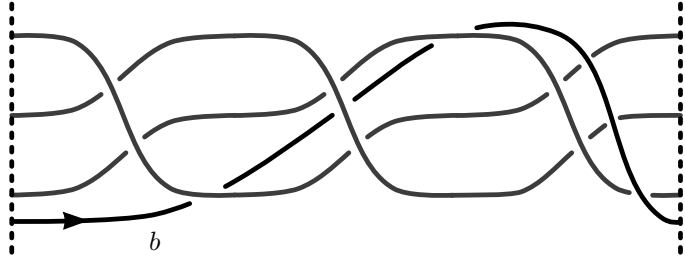}
\caption{The full-twist on 3 strands, denoted $\Delta^2$, together with the generator $b$, $k$ copies of which are inserted at the dashed line of Figure \ref{fig:meridian}. Notice that this curve is homotopic to the core of the handle.}\label{fig:full-twist}\end{figure}

Thus we arrive at the general formula $a \bar{\mu} a^{-1} = \mu = b^{k+1}a^{-1}$. By taking the conjugates $s = a \bar{s} a^{-1}$ and $\mu$, we arrive at:

\begin{proposition}
Suppose $q = 3k+2$, $k \geq 0$, and let $m\ge0$. 
We may take as generators of the peripheral subgroup of $\pi_q^m$ the elements 
$\mu = b^{k+1} a^{-1}$ and $s = a^2(b^{-k}a)^ma(b^{-k}a)^m$.
\end{proposition}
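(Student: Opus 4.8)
The plan is to assemble the two geometric computations already carried out for $\bar\mu$ and $\bar s$ and then transport them across the inner automorphism given by conjugation by $a$. The starting point is that the knot meridian $\bar\mu$ and the surface framing $\bar s$ generate the peripheral subgroup: on the boundary torus these are a meridian together with a longitude-type curve meeting it in a single point, so they form a basis for $\pi_1(\partial M)\cong\mathbb{Z}\times\mathbb{Z}$. Since the peripheral subgroup is only well-defined up to conjugacy in $\pi_q^m$, I am free to replace $\{\bar\mu,\bar s\}$ by $\{a\bar\mu a^{-1},\, a\bar s a^{-1}\}$: conjugation by $a$ is an inner automorphism, so this new pair still generates a peripheral subgroup. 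Thus it suffices to compute the two conjugates $\mu=a\bar\mu a^{-1}$ and $s=a\bar s a^{-1}$ explicitly in terms of $a$ and $b$.

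For the surface framing this is immediate. Reading $\bar s=acaca$ off Figure \ref{fig:vankampen} (with $c=(b^{-k}a)^m$), I compute $s=a\bar s a^{-1}=a(acaca)a^{-1}=a^2cac$, and substituting $c=(b^{-k}a)^m$ yields $s=a^2(b^{-k}a)^m a(b^{-k}a)^m$, exactly as claimed. For the meridian I invoke the inductive homotopy argument summarized in the preceding discussion: the base case $k=0$ identifies $\mu$ with $ba^{-1}$, and each additional full twist $\Delta^2$ prepends one copy of $b$ (since the relevant $b$-curve is homotopic to the core of the handle), giving $\mu=a\bar\mu a^{-1}=b^{k+1}a^{-1}$. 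Collecting these two formulas produces the stated generators.

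The genuinely substantive step is not in this assembly but in the determination of $\bar\mu$ (equivalently of $\mu$): it requires the analysis of the $3$-braid coming from the lower handle and the induction on $k$ tracking how each $\Delta^2$ modifies the meridian word, as in Figures \ref{fig:step1}--\ref{fig:full-twist}. By contrast, extracting $\bar s=acaca$ from the splitting picture and performing the conjugations by $a$ is routine bookkeeping. The one point that warrants a word of care is confirming that $\{\bar\mu,\bar s\}$, and hence its conjugate, really generates the full rank-two peripheral subgroup rather than a proper sublattice; this follows because the meridian and the surface framing meet transversally in a single point on $\partial M$, so their algebraic intersection number is $\pm1$ and they therefore constitute a basis of $\pi_1(\partial M)$.
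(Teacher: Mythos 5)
Your proposal is correct and follows essentially the same route as the paper: reading $\bar{s}=acaca$ off the Heegaard picture, obtaining $\bar{\mu}=b^{k+1}a^{-1}$ (after conjugation by $a$) via the $3$-braid analysis and the induction on $k$ in which each full twist $\Delta^2$ prepends a copy of $b$, and then conjugating both elements by $a$. Your closing remark that $\{\bar{\mu},\bar{s}\}$ genuinely spans the rank-two peripheral subgroup, since the meridian and surface framing have algebraic intersection number $\pm1$ on $\partial M$, makes explicit a point the paper leaves implicit.
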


Note that this formula agrees with the formula for the meridian of the $(3, 3k+2)$-torus knot.  This is not a coincidence, as there exists a homotopy  (as shown in the above discussion) between $b^{k+1} a^{-1}$ and the loop $\mu$ in Figure \ref{fig:step2} which is supported away from the twists added to the torus knot $T_{3,q}$ to create the knot $T_{3,q}^m$.

For use in the next section, we record some identities that now follow immediately from the relation in $\pi^m_q$.  We have that
\begin{align*}\mu &= b^{k+1}a^{-1} = (b^{-k}a)^{-m}b^{-(2k+1)}a^2(b^{-k}a)^{m}\\ 
s&= a^2(b^{-k}a)^ma(b^{-k}a)^m=b^{2k+1}(b^{-k}a)^mb^{k+1}(b^{-k}a)^m
\end{align*}
and refer to these as the meridian and surface framing  respectively for the knot $T^m_{3,3k+2}$. Notice that as an immediate consequence of the construction, the canonical longitude is given by $\lambda=\mu^{-3(3k+2)-2m}s$. This is a direct generalization of the surface framing for the torus knot $T_{3,3k+2}$ in the case $m=0$.

\section{Examples and applications}
\label{sec:examples}

We now turn to applications of our criteria to produce infinite families of  rational homology spheres with non-left-orderable fundamental groups. With the obvious exception of surgery on torus knots, the constructions given here produce hyperbolic examples. All of the examples given are L-spaces.

\subsection{Surgery on torus knots} The work of Moser establishes that surgery on a torus knot always yields a Seifert fibred space, or a connect sum of lens spaces \cite{Moser1971}. As a result, these surgery manifolds are completely understood in terms of L-spaces and non-left-orderability (compare \cite{BGW2010,Peters2009}). However, our interest is in establishing that sufficiently large positive surgery on a torus knot gives rise to a manifold with non-left-orderable fundamental group by applying of Corollary \ref{crl:main-criteria}, directly from the presentation $G_{p,q} = \langle a, b | a^p = b^q \rangle$. 

\begin{theorem}\label{thm:torus} Let $K$ be a positive $(p,q)$-torus knot. Then $\pi_1(S^3_r(K))$ is not left-orderable whenever $r\ge pq-1$. \end{theorem}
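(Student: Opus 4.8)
The plan is to apply Corollary \ref{crl:main-criteria} directly to the torus knot group $G_{p,q} = \langle a,b \mid a^p=b^q\rangle$, using the peripheral data recorded in Section \ref{sec:torus}: the meridian $\mu = b^j a^i$ (with $pj+qi=1$, $p>i>0$, $0>j>-q$) and the surface framing $\lambda \mu^{pq} = a^p = b^q$. Recall that the canonical longitude satisfies $\lambda = \mu^{-pq}a^p$, so that the peripheral element $\mu^p\lambda^q$ can be rewritten, via $\lambda^q = \mu^{-pq\cdot q}a^{pq}$, in terms of the central element $a^p=b^q$. The key observation I would exploit is that $a^p=b^q$ generates the \emph{center} of $G_{p,q}$, and central elements are comparable with $1$ in a manner that is uniform across convex structure. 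Thus I expect the core of the argument to be a statement about the sign of powers of the central element together with the sign of $\mu$ in an arbitrary left-ordering.

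Concretely, to invoke Corollary \ref{crl:main-criteria} I must verify its hypothesis for the appropriate $(p,q)$: namely that $\mu^{p'}\lambda^{q'}>1$ implies $\mu^{p'+N}\lambda^{q'}>1$ for all $N>0$, for some basepoint slope $\frac{p'}{q'}$ with $\frac{p'}{q'} < r$ chosen so that $r \ge pq-1$ lands in an interval of the form $(\frac{p'}{q'},\frac{p'+N}{q'})$. The most natural choice is to take the slope to be $pq-1$ itself (written as $\frac{pq-1}{1}$), reducing the implication to showing that if $\mu^{pq-1}\lambda > 1$ then $\mu^{pq-1+N}\lambda>1$ for all $N>0$. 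Since $\lambda=\mu^{-pq}a^p$, we have $\mu^{pq-1}\lambda = \mu^{-1}a^p$. The heart of the matter is therefore to show: \emph{in every left-ordering of $G_{p,q}$, if $\mu^{-1}a^p>1$ then $\mu^{N-1}a^p>1$ for all $N>0$}, equivalently that once $\mu^{-1}a^p$ is positive, multiplying by positive powers of $\mu$ preserves positivity. This is where the centrality of $a^p=b^q$ is decisive: because $a^p$ is central it commutes past everything, so the sign behavior of $\mu^{j}a^p$ as $j$ grows is governed by comparing the ``size'' of $a^p$ against powers of $\mu$.

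The step I expect to be the main obstacle is pinning down the sign relationship between $\mu$ and the central element $a^p$ in an arbitrary left-ordering, without any foliation or geometric input---purely from the presentation. The natural tool is again convexity: I would consider whether $\langle a^p\rangle$, being central, is convex in any left-ordering (central subgroups of left-ordered groups need not be convex in general, so this requires care), or alternatively argue via the cyclic quotient $G_{p,q}/\langle\langle a^p\rangle\rangle$ structure. An efficient route may be to show directly that if $\mu^{-1}a^p > 1$, i.e. $a^p > \mu$, then because $a^p$ is central and positive relative to $\mu$, one obtains $a^p > \mu^{1-N}$ hence $\mu^{N-1}a^p>1$ for all $N>0$ by left-multiplying by $\mu^{N-1}$ and using commutativity to keep $a^p$ in place. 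The delicate point is ensuring the inequality direction is preserved under these multiplications, which follows from left-invariance but must be checked against the possibility that $\mu$ itself is negative; handling both signs of $\mu$ uniformly, using that $a^p$ is central and that the implication in Corollary \ref{crl:main-criteria} need only be established for those orderings in which the antecedent holds, is the part that demands the most attention.
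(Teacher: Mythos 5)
Your reduction via Corollary \ref{crl:main-criteria} is the right framework, but the step you yourself identify as the heart of the matter is exactly where the argument breaks, and centrality cannot repair it. From $\mu^{-1}a^p>1$, i.e.\ $a^p>\mu$, you cannot conclude $a^p>\mu^{1-N}$ for all $N>0$: if $\mu<1$ then $\mu^{1-N}=(\mu^{-1})^{N-1}$ is an unbounded increasing sequence of positive elements, and ``$a^p>\mu$'' places no bound on how these powers compare with $a^p$. This is an Archimedean-type question that left-invariance plus commutativity cannot answer; in fact the claimed inference is false as a statement of pure ordered-group algebra. Take $\mathbb{Z}^2=\langle\mu\rangle\oplus\langle z\rangle$, where \emph{every} element is central, ordered by the functional $(m,n)\mapsto -m+n\sqrt{2}$: then $z>1$, $\mu<1$, and $\mu^{-1}z>1$, yet $\mu^{N-1}z<1$ as soon as $N-1>\sqrt{2}$. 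So any correct proof must use the non-abelian structure of $G_{p,q}$ --- the relation $a^p=b^q$ together with the specific word forms of $\mu$ and $\lambda$ in the generators --- and not merely the signs of $\mu$ and the central element. That is what the paper's proof does: it starts from $\mu^{pq}\lambda=a^p=b^q>1$, which immediately forces $a>1$ and $b>1$ (a foothold your base point $\mu^{pq-1}\lambda=\mu^{-1}a^p>1$ does not provide), proves Lemma \ref{lem:pq} to extract one of two explicit positive subwords, and then in each case rewrites $\mu^{N+pq}\lambda$ as a product of positive elements via a minimal-exponent trick, using centrality of $b^q$ only to conjugate a power of $b$ across the word.

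Two further structural points. First, the paper never works from the slope $pq-1$: it establishes the implication at slope $pq$ (covering $(pq,\infty)$ by Corollary \ref{crl:main-criteria}), proves separately that $\mu^{pq}\lambda>1$ implies $\mu^{pq-1}\lambda>1$ (covering the interval $(pq-1,pq)$ via Corollary \ref{cor:interval}), and disposes of $r=pq$ by observing that both generators become torsion in $G_{p,q}/\langle\langle a^p\rangle\rangle$. Second, even if your key implication at slope $pq-1$ were established, Corollary \ref{crl:main-criteria} yields non-left-orderability only for $r$ \emph{strictly} greater than the base slope, so the endpoint $r=pq-1$ in the statement of the theorem would still need a separate argument (for instance, that $(pq-1)$-surgery on $T_{p,q}$ is a lens space, whose fundamental group is finite).
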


This result follows immediately from Corollary \ref{cor:interval} and Corollary \ref{crl:main-criteria}, together with the following observations. 
 
\begin{lemma}
\label{lem:pq}
Suppose that $\mu^{pq} \lambda >1$ in some left-ordering of $G_{p,q}$. Then either
$b^{(q+j)j}a^{(q+j)i} >1$ or $ a^{(-j)(i-p)}b^{(-j)(q+j)} >1$.
\end{lemma}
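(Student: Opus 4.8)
The plan is to reduce the hypothesis to a positivity statement about the single generator $a$, and then run a contrapositive argument that uses only left-invariance together with monotonicity of powers. First I would compute $\mu^{pq}\lambda$: using the identity $\lambda = \mu^{-pq}a^p = \mu^{-pq}b^q$ recorded in the conventions above, the meridian powers cancel and we obtain $\mu^{pq}\lambda = a^p = b^q$, which is precisely the central element of $G_{p,q}$. Thus the hypothesis $\mu^{pq}\lambda>1$ says exactly that $a^p>1$. I would then invoke the elementary fact that in any left-ordered group the sign of $g^n$ agrees with that of $g$ for $n\ge1$ (if $g>1$, left-multiplying $g^{n-1}>1$ by $g$ gives $g^n>g>1$, and symmetrically if $g<1$). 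Since $a^p>1$ rules out both $a=1$ and $a<1$, this forces $a>1$.

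Next I would argue by contradiction. Write $g_0 = b^{(q+j)j}a^{(q+j)i}$ and $g_1 = a^{(-j)(i-p)}b^{(-j)(q+j)}$ for the two target elements, and suppose $g_0\le1$ and $g_1\le1$. Left-multiplying $g_0\le1$ by $b^{-(q+j)j}=b^{(-j)(q+j)}$ yields $a^{(q+j)i}\le b^{(-j)(q+j)}$, while left-multiplying $g_1\le1$ by $a^{(-j)(p-i)}$ yields $b^{(-j)(q+j)}\le a^{(-j)(p-i)}$. The $b$-power is literally the same group element in both inequalities, so chaining them gives $a^{(q+j)i}\le a^{(-j)(p-i)}$. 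Because $a>1$, powers of $a$ are strictly monotone, so this forces $(q+j)i\le(-j)(p-i)$. But expanding, $(q+j)i-(-j)(p-i)=qi+jp=1$ by the defining relation $pj+qi=1$, so in fact $(q+j)i=1+(-j)(p-i)>(-j)(p-i)$, a contradiction. Hence at least one of $g_0,g_1$ is positive, as claimed.

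I expect the step requiring the most care to be staying inside a left-ordering rather than a bi-ordering. A tempting shortcut is to observe that the product $g_0g_1$ simplifies (the middle $a$-exponents $(q+j)i+(-j)(i-p)$ collapse to $1$) to a conjugate of $a$, which one might declare positive once $a>1$; but conjugation does not preserve sign in a merely left-ordered group, so this fails. The argument above is arranged to avoid that trap: every manipulation is a left-multiplication, the two assumed inequalities are made comparable only through the shared quantity $b^{(-j)(q+j)}$, and thereafter I rely solely on monotonicity of powers of the single element $a$. The essential arithmetic input is the coprimality relation $pj+qi=1$, which is exactly what renders the two assumed inequalities incompatible.
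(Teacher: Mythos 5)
Your proof is correct: the reduction $\mu^{pq}\lambda = a^p$, the deduction $a>1$ from $a^p>1$, the two left-multiplications producing the chain $a^{(q+j)i} \le b^{(-j)(q+j)} \le a^{(-j)(p-i)}$, and the monotonicity of powers of $a$ combined with the arithmetic $(q+j)i - (-j)(p-i) = qi+pj = 1$ are all valid using only left-invariance. It is, however, a genuinely different (and longer) route than the paper's, and--amusingly--the paper's proof is exactly the ``tempting shortcut'' you warned against, just taken in the opposite order. Writing $g_0 = b^{(q+j)j}a^{(q+j)i}$ and $g_1 = a^{(-j)(i-p)}b^{(-j)(q+j)}$ as in your proposal: you are right that $g_0g_1$ reduces to the conjugate $b^{(q+j)j}\,a\,b^{-(q+j)j}$, whose sign a left-ordering does not control; but the reverse product collapses with no conjugation at all, since
\[
g_1g_0 = a^{(-j)(i-p)}b^{(-j)(q+j)}\,b^{(q+j)j}a^{(q+j)i} = a^{(-j)(i-p)+(q+j)i} = a^{pj+qi} = a
\]
on the nose, the $b$-powers cancelling exactly. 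Since a product of two non-positive elements is non-positive under left-invariance, assuming $g_0\le1$ and $g_1\le1$ gives $a\le1$, contradicting $a>1$; that is the paper's entire argument. Your chaining argument is essentially this computation unrolled: it buys an argument in which one never multiplies the two hypothesized negative elements together (every step is a left-multiplication or a comparison of powers of the single element $a$), at the cost of an auxiliary monotonicity lemma; the paper's version buys brevity by choosing the order of multiplication so that the conjugation you feared never appears.
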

\begin{proof}
Observe that $\mu^{pq} \lambda = a^p >1$, so $a>1$.  Assuming both $1> a^{(-j)(i-p)}b^{(-j)(q+j)}$ and $1>b^{(q+j)j}a^{(q+j)i} $, their product must also be negative, so we compute
\[1> a^{(-j)(i-p)}b^{(-j)(q+j)} b^{(q+j)j}a^{(q+j)i}= a^{pj+qi} = a,
\]
a contradiction.
\end{proof}

\begin{proposition}Suppose that $\mu^{pq} \lambda >1$ in some left-ordering of $G_{p,q}$. Then $\mu^{N+pq} \lambda >1$ for all $N >0$.
\end{proposition}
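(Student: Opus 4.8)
The plan is to reduce everything to the central element $z := \mu^{pq}\lambda = a^p = b^q$. This is central in $G_{p,q}$, and the hypothesis asserts $z>1$. Since $\mu^{N+pq}\lambda = \mu^N(\mu^{pq}\lambda) = \mu^N z$, the claim is equivalent to showing $\mu^N z > 1$ for every $N>0$. First I would record the elementary fact that in any left-ordering $z = a^p > 1$ forces $a>1$ (if $a<1$ then $1>a>a^2>\cdots$, so $a^p<1$), and likewise $z = b^q>1$ forces $b>1$. Consequently any word in $a,b$ with non-negative exponents is positive, since products and positive powers of positive elements are positive.

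The main computation I would carry out is a normal form for $\mu^N z$. Using $pj+qi=1$ together with the expression $\mu = b^j a^i$ and the centrality of $z=b^q$, one writes $\mu^N = b^j(a^i b^j)^{N-1}a^i$, moves the central factor $z=b^q$ to the front, and absorbs it into $b^j$. This yields
\[\mu^N z = b^{q+j}\,W^{N-1}\,a^i, \qquad W := a^i b^j,\]
where $q+j>0$ and $i>0$. The case $N=1$ already reads $\mu z = b^{q+j}a^i$, a word with non-negative exponents, hence $\mu z>1$. More generally $W = a^i b^j = a^i\mu a^{-i}$ is a conjugate of the meridian, and whenever $W\ge 1$ the whole expression is a product of positive elements, giving $\mu^N z \ge b^{q+j}a^i > 1$ for all $N$.

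The hard part is the case $W<1$, that is, when the conjugate $a^i\mu a^{-i}$ of the meridian is negative: then $W^{N-1}$ drifts negative as $N$ grows, and one must show the fixed positive flanks $b^{q+j}$ and $a^i$ nonetheless dominate for every $N$. This is precisely where I would invoke Lemma~\ref{lem:pq}: its dichotomy that $b^{(q+j)j}a^{(q+j)i}>1$ or $a^{(-j)(i-p)}b^{(-j)(q+j)}>1$ pins down the sign of the relevant boundary element, and the goal is to re-factor $\mu^N z$ so that the surviving branch appears to a non-negative power, flanked by positive powers of $a$ and $b$, forcing positivity. Controlling this growing interior power against the bounded exterior contributions — equivalently, ruling out a left-ordering with $z>1$ but $\mu^N z<1$ for some $N$ — is the crux, and I expect it to require the full strength of the relation $a^p=b^q$ rather than formal word manipulation alone.
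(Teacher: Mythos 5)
Your reduction to $\mu^{N}z$ with $z=\mu^{pq}\lambda=a^p=b^q$ central, the observation that $a,b>1$, and the decomposition $\mu^N z = b^{q+j}W^{N-1}a^i$ with $W=a^ib^j$ are all correct, and they do dispose of the cases $N=1$ and $W\ge 1$. But the argument stops exactly where the difficulty begins: when $W<1$ you state a goal (``re-factor $\mu^N z$ so that the surviving branch appears to a non-negative power, flanked by positive powers of $a$ and $b$'') rather than carry out any re-factoring. Since the only nontrivial situation is $\mu<1$, and $W=a^i\mu a^{-i}$ is a conjugate of $\mu$, the case $W<1$ is not a peripheral case -- it is the generic hard one, and nothing you have written excludes a left-ordering with $z>1$, $W<1$, and $\mu^N z<1$ for some large $N$. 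This is a genuine gap.

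The missing idea, which is how the paper closes this case, is a minimality trick applied to the dichotomy of Lemma~\ref{lem:pq}. Suppose the first branch holds, $b^{(q+j)j}a^{(q+j)i}>1$. Then the set of integers $n>0$ with $b^{nj}a^{ni}>1$ is nonempty (it contains $n=q+j$); take $n$ minimal. Minimality gives $1>b^{(n-1)j}a^{(n-1)i}$, hence $a^{(1-n)i}b^{(1-n)j}>1$, so you now have \emph{two} positive words whose product $b^{nj}a^{ni}\cdot a^{(1-n)i}b^{(1-n)j}=b^{nj}\bigl(a^ib^j\bigr)b^{-nj}$ is precisely a conjugate of your $W$. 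One then verifies the identity
\[
\mu^N z \;=\; \mu^N a^p \;=\; b^{(1-n)j}\bigl(b^{nj}a^{ni}\,a^{(1-n)i}b^{(1-n)j}\bigr)^N b^{(n-1)j}b^q ,
\]
writes $(1-n)j=kq+l$ with $0\le l<q$ and $k\ge 0$, and uses centrality of $b^q$ to collapse the flanks: the right side equals $b^{l}\bigl(\cdots\bigr)^N b^{q-l}$, a product of positive elements, so $\mu^N z>1$. The second branch of the lemma is handled symmetrically using the other expression $\mu=b^{q+j}a^{i-p}$. Note also that your closing expectation is off: no ``full strength'' of the relation beyond centrality of $a^p=b^q$ is needed; what is needed, and what your sketch lacks, is the pivot from ``some word $b^{nj}a^{ni}$ is positive'' to ``the minimal such word and the inverse of its predecessor are both positive,'' which is what supplies positive building blocks for the growing interior power.
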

\begin{proof}
Assuming that $\mu^{pq} \lambda = a^p = b^q >1,$ we deduce that both $a$ and $b$ are positive elements as well.  We use this fact throughout the proof.
By Lemma \ref{lem:pq}, there are two cases to consider.

\noindent \textbf{Case 1:} $b^{(q+j)j}a^{(q+j)i} >1$. 

Suppose that $n>0$ is the smallest integer such that $b^{nj}a^{ni} >1$.  Then we may write:
\begin{eqnarray*}
 \mu^{N+pq} \lambda & = & (b^ja^i)^N a^p\\
   &=& b^{(1-n)j} (b^{nj}a^{ni}(a^{(1-n)i} b^{(1-n)j}))^N b^{(n-1)j} b^q. \end{eqnarray*}
As $n$ is the smallest integer for which $b^{nj}a^{ni} >1$, we have $1>b^{(n-1)j}a^{(n-1)i}$, and hence $a^{(1-n)i}b^{(1-n)j}>1$. 

Write $(1-n)j =kq +l$, where $0 \leq l < q$ and $k$ is positive.  This allows us to rewrite the above expression, and use the fact that $b^q$ is central, to arrive at:
\begin{eqnarray*}
 \mu^{N+pq} \lambda & = & b^{kq+l} (b^{nj}a^{ni}(a^{(1-n)i} b^{(1-n)j}))^N b^{-kq-l}b^q\\
   &=&b^{l} (b^{nj}a^{ni}(a^{(1-n)i} b^{(1-n)j}))^N b^{q-l}. 
\end{eqnarray*}
We conclude that $\mu^{N+pq} \lambda$ is positive, as it is a product of positive elements.

\noindent \textbf{Case 2:} $ a^{(-j)(i-p)} b^{(-j)(q+j)}>1$. 

As above, suppose that $n>0$ is the smallest integer such that $ a^{n(i-p)} b^{n(q+j)} >1$.  Then we may write:
\begin{eqnarray*}
 \mu^{N+pq} \lambda & = & (b^{q+j}a^{i-p})^N a^p\\
   &=& b^{n(q+j)}((b^{(1-n)(q+j)} a^{(1-n)(i-p)})a^{n(i-p)}b^{n(q+j)})^Nb^{q-n(q+j)}. 
\end{eqnarray*}
Since we have assumed $n$ minimal, both $ b^{(1-n)(q+j)}a^{(1-n)(i-p)}$ and $a^{n(i-p)}b^{n(q+j)}$ are positive.  Write $n(q+j) = kq+l$, where $0 \leq l<q$ and $k$ is positive.  Then we use the fact that $b^q$ is central to arrive at
\begin{eqnarray*}
 \mu^{N+pq} \lambda & = & b^{kq+l}((b^{(1-n)(q+j)} a^{(1-n)(i-p)})a^{n(i-p)}b^{n(q+j)})^Nb^{q-kq-l}. \\
& = & b^{l}((b^{(1-n)(q+j)} a^{(1-n)(i-p)})a^{n(i-p)}b^{n(q+j)})^Nb^{q-l}. 
\end{eqnarray*}
which is a product of positive terms as in Case 1, and the proof is complete.
\end{proof}

\begin{proposition}
Suppose that $\mu^{pq} \lambda >1$ in some left-ordering of $G_{p,q}$. Then $\mu^{pq-1} \lambda >1$.
\end{proposition}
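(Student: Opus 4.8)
The plan is to reduce everything to the central identity $\mu^{pq}\lambda = a^p = b^q$ recorded in Section~\ref{sec:torus}, which holds because $a^p$ (equivalently $b^q$) generates the centre of $G_{p,q}$. First I would peel off a single meridian, writing
\[
\mu^{pq-1}\lambda = \mu^{-1}\bigl(\mu^{pq}\lambda\bigr) = \mu^{-1}a^p,
\]
so that the entire task becomes showing $\mu^{-1}a^p > 1$. The point is that, unlike the previous proposition (which \emph{increased} the meridian exponent), here we are removing one meridian, and this should collapse to something with all exponents of the same sign.

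Next I would rewrite $\mu^{-1}a^p$ as a positive word in $a$ and $b$. Using $\mu = b^j a^i$, so that $\mu^{-1} = a^{-i}b^{-j}$, together with the centrality of $a^p$, I compute
\[
\mu^{-1}a^p = a^{-i}b^{-j}a^p = a^{-i}a^p b^{-j} = a^{p-i}b^{-j}.
\]
By the normalizations imposed when choosing $i,j$ with $pj+qi=1$ — namely $p>i>0$ and $0>j>-q$ — both exponents $p-i$ and $-j$ are strictly positive.

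Finally I would invoke the standard left-ordering fact already used in the preceding proposition: since $\mu^{pq}\lambda = a^p = b^q > 1$, we have $a>1$ and $b>1$ (because $g^n>1$ forces $g>1$ in any left-ordering). Hence $a^{p-i}>1$ and $b^{-j}>1$, and a product of two positive elements is again positive, giving $\mu^{pq-1}\lambda = a^{p-i}b^{-j} > 1$.

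There is essentially no obstacle to overcome here. Lowering the meridian exponent by one trivializes — via the centrality of $a^p$ — to the word $a^{p-i}b^{-j}$ in which every exponent is already positive, so no delicate regrouping or peeling off of a central $b^q$ is needed (in contrast to the $\mu^{N+pq}\lambda$ case). The only point requiring care is the \emph{direction} of the inequalities $p>i$ and $j<0$, which is exactly what guarantees the two exponents have the correct sign; I would state these sign conditions explicitly so the final ``product of positive elements'' step is justified.
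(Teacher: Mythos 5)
Your proof is correct and follows the paper's own argument essentially verbatim: peel off one meridian to get $\mu^{-1}a^p$, use centrality of $a^p$ to rewrite this as $a^{p-i}b^{-j}$, and conclude from $p>i>0>j$ and the positivity of $a$ and $b$ (forced by $a^p=b^q>1$) that this is a product of positive elements. The only difference is that you spell out the intermediate steps (the commutation past $b^{-j}$ and the fact that $g^n>1$ implies $g>1$ in a left-ordering) which the paper leaves implicit.
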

\begin{proof}
If $\mu^{pq} \lambda = a^p = b^q >1$, then both $a$ and $b$ are positive.  Hence $\mu^{pq-1} \lambda  = (b^j a^i)^{-1} a^p = a^{p-i} b^{-j}$ where $p>i>0>j$. Therefore $\mu^{pq-1} \lambda$ is a product of positive elements, and so is positive.
\end{proof}

Lastly, we observe that the quotient $G_{p,q} / \langle \langle a^p \rangle \rangle$ is not left-orderable either, as both generators $a, b$ of $G_{p,q}$ are mapped to torsion elements in the quotient (in fact, Moser shows that the quotient is finite \cite{Moser1971}). Combining these observations, we have that surgery coefficients in the interval $(pq-1,\infty)=(pq-1,pq)\cup\{ pq \}\cup(pq,\infty)$ give rise to non-left-orderable fundamental groups, concluding the proof of Theorem \ref{thm:torus}. 

\subsection{Surgery on pretzel knots} We consider the L-space knots $T_{3,5}^m\simeq P(-2,3,5+2m)$ for $m\ge0$. The fundamental groups of these knots are given by 
\[\langle a, b | a^2(b^{-1} a)^m a = b^3(b^{-1}a)^m b^2 \rangle, 
\]
together with the peripheral elements 
\[ \mu = b^2 a^{-1} = (a^{-1}b)^{-m}b^{-3}a^2(a^{-1}b)^m
\]
and 
\[ s = \mu^{15+2m} \lambda = a^2 (b^{-1}a)^m a(b^{-1}a)^m = b^3(b^{-1}a)^mb^2(b^{-1}a)^m.  
\]

\begin{theorem}
\label{thm:pretzel}
If $r > 15+2m$ and $m\ge 0$, then $r$-surgery on the $(-2,3,5+2m)$-pretzel knot gives rise to a manifold with non-left-orderable fundamental group. 
\end{theorem}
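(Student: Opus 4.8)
The plan is to verify the hypothesis of Corollary~\ref{crl:main-criteria} at the surface framing slope. Since the longitude satisfies $\lambda=\mu^{-(15+2m)}s$, the framing element is $s=\mu^{15+2m}\lambda$, so taking $(p,q)=(15+2m,1)$ reduces the theorem to the following claim: for every left-ordering $>$ of $\pi^m_5=\langle a,b\mid a^2(b^{-1}a)^ma=b^3(b^{-1}a)^mb^2\rangle$ in which $s>1$, one has $\mu^{N}s=\mu^{15+2m+N}\lambda>1$ for all $N>0$. This is the exact analogue of the key Proposition in the proof of Theorem~\ref{thm:torus}, now carried out for the one-relator group $\pi^m_5$ in place of $G_{p,q}$.

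First I would record the algebraic identities that make the computation tractable. Writing $c=(b^{-1}a)^m$, $\alpha=ac$ and $\beta=b^2c$, the defining relation factors cleanly as
\[
s=a\alpha^2=b\beta^2,
\]
while the meridian becomes $\mu=\beta\alpha^{-1}=b^2a^{-1}$. These yield $\mu a=b^2$ and $\mu\alpha=\beta$, hence $\mu s=\mu a\,\alpha^2=b^2\alpha^2$, and therefore
\[
\mu^{N}s=b^2\bigl(a^{-1}b^2\bigr)^{N-1}\alpha^2\qquad(N\ge1).
\]
I would also use that $\mu$ and $s$ commute, being peripheral.

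Next comes the positivity input. Mirroring the step in Theorem~\ref{thm:torus} where $s=a^p=b^q>1$ forces $a,b>1$, I would show that $s>1$ forces the building blocks $a,b,\alpha,\beta$ to be positive. The starting observation is a dichotomy in the spirit of Lemma~\ref{lem:pq}: from $s=a\alpha^2>1$ one cannot have both $a<1$ and $\alpha<1$, for left-invariance would then give $a\alpha^2<1$; likewise $s=b\beta^2>1$ rules out $b,\beta<1$ simultaneously. Chasing these alternatives through the relations $\mu a=b^2$ and $\mu\alpha=\beta$ (together with $c=(b^{-1}a)^m$) should pin down all four signs.

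Finally I would establish the telescoping. If $\mu>1$ the claim is immediate, since $\mu^{N}s$ is then a product of positive elements. The real work is the case $\mu<1$, where $\mu^{N}s>1$ is equivalent to $s>\mu^{-N}$, so one must show that the single positive element $s$ exceeds every power $\mu^{-N}$. Using the factorisation $\mu^{N}s=b^2(a^{-1}b^2)^{N-1}\alpha^2$ together with the positivity above, I would choose the minimal $n$ for which a suitable window in the string $(a^{-1}b^2)^{N-1}$ becomes positive and then regroup the product, along the lines of Case~1 and Case~2 in the proof of Theorem~\ref{thm:torus}. The step I expect to be the main obstacle is precisely this regrouping: unlike the torus knot group, $\pi^m_5$ has trivial centre, so there is no central framing element to commute past the telescoped product, as was done there using that $b^q$ is central. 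The substitute is the peripheral commutativity $[\mu,s]=1$ together with the two expressions $s=a\alpha^2=b\beta^2$, which should allow one to close the product up into positive blocks; arranging this uniformly in both $m$ and $N$ is the crux of the argument.
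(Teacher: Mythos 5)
Your reduction is exactly the paper's: apply Corollary~\ref{crl:main-criteria} with $(p,q)=(15+2m,1)$, so that the theorem follows from the proposition that $s>1$ implies $\mu^N s>1$ in every left-ordering of $\pi^m_5$, and your identities $s=a\alpha^2=b\beta^2$ (with $\alpha=a(b^{-1}a)^m$, $\beta=b^2(b^{-1}a)^m$), $\mu a=b^2$, $\mu\alpha=\beta$, and $\mu^Ns=b^2(a^{-1}b^2)^{N-1}\alpha^2$ are all correct. The gap is in your ``positivity input.'' From $s>1$ the two dichotomies only give ($a>1$ or $\alpha>1$) and ($b>1$ or $\beta>1$), and no amount of chasing through the relations pins down all four signs: the pattern $\alpha,\beta>1$ with $1>a>b$ (equivalently $w=b^{-1}a>1$ and $a<1$) is consistent with every relation you cite, and the paper's own proof must confront precisely this configuration rather than exclude it. In that subcase the paper proves only the weaker statements $aw^m>1$ and $a^2w^m>1$ (that is, $\alpha>1$ and $a\alpha>1$), first using $s>1$ to force $ab^{-1}>1$ and $a^2b^{-2}>1$, and then exhibits $\mu^Ns=s\mu^N=a^2w^m(\mu^{-1}w)^N aw^m$ as a product of positive elements. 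So the assertion that $s>1$ forces $a,b,\alpha,\beta$ all positive is not available, and your argument cannot proceed as written.

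A second, related problem: even in the favorable case $b>a>1$ (where $w<1$), your factorization $\mu^Ns=b^2(a^{-1}b^2)^{N-1}\alpha^2$ does not close the argument, because the sign of $\alpha$ is still unknown there; $s=a\alpha^2>1$ with $a>1$ says nothing about $\alpha$. The paper avoids this by rewriting $(b^{-1}a)^m=b^{-1}(\mu^{-1}b)^{m-1}a$ (valid for $m\ge1$; the case $m=0$ is the torus knot $T_{3,5}$, already covered by Theorem~\ref{thm:torus}), which gives $\mu^Ns=b^2(a^{-1}b^2)^{N}(\mu^{-1}b)^{m-1}ab(\mu^{-1}b)^{m-1}a$, a product of the four elements $a$, $b$, $\mu^{-1}$, $a^{-1}b^2$, each positive in that case. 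Finally, the ``minimal $n$'' telescoping you anticipate importing from the torus-knot proposition is not what is needed: the paper's pretzel argument uses no minimality at all, only a case analysis on the signs of $w$, $a$, $b$. The lack of a centre that you flag is a genuine difference, but its resolution is not peripheral commutativity alone (that is used only to swap $\mu^Ns=s\mu^N$); the heavy lifting is done by the substitutions $ab^{-1}=\mu^{-1}b$ and $ab^{-2}=\mu^{-1}$, which convert the mixed words into products of blocks whose signs are controlled.
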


By Corollary \ref{crl:main-criteria}, Theorem \ref{thm:pretzel} is an immediate consequence of the following.
\begin{proposition}
If $>$ is any left-ordering of $G$, then $s>1$ implies $\mu^N s >1 $ for all integers $N>0$.
\end{proposition}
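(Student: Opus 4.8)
My plan is to reduce the statement to a single explicit word identity and then argue positivity factor by factor. Writing $c=(b^{-1}a)^m$, the defining relation $a^2ca=b^3cb^2$ gives
\[
s=a^2cac=a(ac)^2=b(b^2c)^2,\qquad \mu s=b^2(ac)^2,
\]
and hence, since $\mu=b^2a^{-1}$,
\[
\mu^N s=\mu^N a\,(ac)^2=b^2(a^{-1}b^2)^{N-1}(ac)^2 .
\]
I would also record the two auxiliary facts that drive the argument: $b^2c=\mu(ac)$ (immediate from $\mu=b^2a^{-1}$), and that $\mu$ and $s$ commute, because they lie in the peripheral $\bZ\times\bZ$. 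The whole problem is then to show that the displayed right-hand side is positive whenever $s>1$.

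First I would extract sign information from $s>1$. Since a product of two elements that are both $<1$ is again $<1$ in any left-ordering, the factorizations $s=a(ac)^2$ and $s=b(b^2c)^2$ yield at once the dichotomies ``$a>1$ or $ac>1$'' and ``$b>1$ or $b^2c>1$''; and since $g^2>1$ forces $g>1$, each alternative can be stated in terms of the single elements $ac$ and $b^2c$. The plan is to run these dichotomies together with the relation $b^2c=\mu(ac)$ so as to pin down the signs of the inner factors $ac$ and $b^2c$, which are the building blocks of every term appearing above.

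The main work is then to convert $b^2(a^{-1}b^2)^{N-1}(ac)^2$ into a manifestly positive product, and here I would imitate the torus-knot argument of the preceding subsection. Introduce the minimal $n>0$ for which the ``abelianized'' power $b^{2n}a^{-n}$ is positive, and use the free-group identity
\[
(b^2a^{-1})^N=b^{2(1-n)}\bigl(b^{2n}a^{-1}b^{2(1-n)}\bigr)^N b^{2(n-1)},
\]
whose middle factor equals $(b^{2n}a^{-n})(a^{n-1}b^{2(1-n)})$, a product of two positive elements by minimality of $n$. The obstacle is the bookend $b^{2(1-n)}$, which is a \emph{negative} power of $b$ once $n\ge2$. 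In the torus case one absorbs such a term using the centrality of $a^p=b^q$; but the knots here are hyperbolic, so their groups have trivial center and no analogous central element is available. My plan to circumvent this is to exploit the commutation $\mu^N s=s\mu^N$ to slide $s$ past the offending bookend, and the identity $b^2c=\mu(ac)$ to rewrite the leftover $b$-powers in terms of the already-positive factors $ac$ and $b^2c$; together with the sign information from the previous paragraph this should reassemble $\mu^N s$ as a product of positive elements. I expect controlling these bookend terms without a central element — that is, replacing the centrality step of the torus-knot proof — to be the crux of the whole argument.
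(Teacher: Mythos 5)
Your algebraic setup is correct, and it is in fact the same raw material the paper works with: the identities $s=a(ac)^2=b(b^2c)^2$, $\mu^N s=b^2(a^{-1}b^2)^{N-1}(ac)^2$, $b^2c=\mu(ac)$, the dichotomies extracted from $s>1$, and the commutation $\mu s=s\mu$ are all valid. But the proof stops exactly where it would have to begin: you say the rewritten word ``should reassemble'' into a product of positive elements and that replacing the centrality step is ``the crux of the whole argument'' --- that crux is never carried out, so this is a plan, not a proof. Moreover the route you sketch has concrete defects. First, the minimal $n>0$ with $b^{2n}a^{-n}>1$ need not exist, and nothing in your dichotomies produces it; note that for $n=1$ this element \emph{is} $\mu=b^2a^{-1}$, which is negative in the only nontrivial case, and in the torus-knot argument the existence of such an element is itself a lemma (Lemma \ref{lem:pq}) proved from the relation $a^p=b^q$, with no counterpart here. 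Second, sliding $s$ past a bookend via $\mu^N s=s\mu^N$ only relocates the negative powers of $b$ to the other side of $s$, and eliminating a $b^2$ via $b^2=\mu(ac)c^{-1}$ reintroduces the negative element $\mu$ (and a $c^{-1}$ of unknown sign), so it is not clear any progress is made.

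It is worth seeing how the paper actually gets around this, because it shows your minimal-$n$ device is unnecessary in one case and insufficient in the other. The paper splits on the sign of $w=b^{-1}a$ (equivalently of $c=w^m$). When $w<1$, your own dichotomy forces $a>1$, hence $b>a>1$, and then $a^{-1}b^2>1$ and $ab^{-1}=\mu^{-1}b>1$, so $ac=(ab^{-1})^m a>1$: your displayed word $b^2(a^{-1}b^2)^{N-1}(ac)^2$ is \emph{already} a product of positive elements, with no bookends at all --- you could have closed this case with what you had. When $w>1$ the hard sub-case is $1>a>b$, where $b^2$ and $a^{-1}b^2$ are both negative and your expression is of no use as written. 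There the paper uses a second ingredient absent from your proposal: the conjugated expression of the meridian, $\mu=c^{-1}b^{-3}a^2c$, coming from the group relation, which yields $\mu^N s=s\mu^N=a^2c\,(\mu^{-1}w)^N\,ac$; positivity of this word then requires the finer facts $ab^{-1}>1$, $a^2b^{-2}>1$, $ac>1$ and $a^2c>1$, each extracted from $s>1$ by a separate contradiction argument. Your dichotomies give only $ac>1$ and $b^2c>1$ in this sub-case, which is not enough --- for instance $a^2c=a(ac)$ cannot be deduced positive from $a<1$ and $ac>1$. So the gap is genuine: the half of the proof that is actually difficult needs ideas (the second meridian formula and the extra sign lemmas) that the proposal neither contains nor points toward.
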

\begin{proof}
Suppose that $s>1$, and let $N>0$ be any positive integer.  If $\mu>1$, then the conclusion $\mu^N s >1 $ follows trivially, as an arbitrary product of positive elements is always positive.  Assume then, without loss of generality, that $\mu = b^2 a^{-1}$ is negative (and $\mu^{-1}=a b^{-2} >1$).

We proceed by considering two cases, depending on whether the word $w=b^{-1}a$ is a positive or negative element. 

{\bf Case 1:} $1>w$.

Notice that in this case we have  \[s=a^2(b^{-1}a)^ma(b^{-1}a)^m=a(aw^m)^2\] so that $1>s$ as a product of negative elements, unless $a>1$. Since $1>w$ implies that $b>a$, and $s>1$ by assumption, we need only consider $b>a>1$. 

Note that in this setting $b^2>b>a$ implies that $a^{-1}b^2>1$ and write \[\mu^N=(b^2a^{-1})^N=b^2(a^{-1}b^2)^{N-1}a^{-1}\] and \[(b^{-1}a)^m=b^{-1}(ab^{-1})^{m-1}a=b^{-1}(\mu^{-1}b)^{m-1}a\]so that \[s=b(b^2(b^{-1}a)^m)^2 = b(b(\mu^{-1}b)^{m-1}a)^2 = b^2(\mu^{-1}b)^{m-1}ab(\mu^{-1}b)^{m-1}a.\] Now \[\mu^Ns=b^2(a^{-1}b^2)^{N-1}a^{-1}b^2(\mu^{-1}b)^{m-1}ab(\mu^{-1}b)^{m-1}a=b^2(a^{-1}b^2)^{N}(\mu^{-1}b)^{m-1}ab(\mu^{-1}b)^{m-1}a\] must be positive, as a product of the positive elements $a$, $b$, $\mu^{-1}$ and $a^{-1}b^2$.

{\bf Case 2:} $w>1$.

Writing \[(b^{-3}a^2)^N=b^{-2}((b^{-1}a)(ab^{-2}))^{N-1}b^{-1}a^2 = b^{-2}(w\mu^{-1})^{N-1}wa\] we have that \begin{align*} \mu^Ns=s\mu^N &= a(a(b^{-1}a)^m)^2(b^{-1}a)^{-m}(b^{-3}a^2)^N(b^{-1}a)^m \\ 
&= a^2w^mab^{-2}(w\mu^{-1})^{N-1}waw^m \\
&= a^2w^m(\mu^{-1}w)^{N}aw^m,
\end{align*}
immediately exhibiting $\mu^Ns>1$ as a product of positive elements whenever $a>1$. Since $w=b^{-1}a>1$ implies $a>b$, the assumption that $s>1$ reduces the proof to the case $1>a>b$.

Suppose then that $1>a>b$, and restrict to the case $m>0$ (noting that $m=0$ has already been handled in the torus knot setting). We claim that $a^2w^m>1$ and $aw^m>1$. To see this, notice that if $1>ab^{-1}$ then \[s=b(b^2(b^{-1}a)^m)^2=b(b(ab^{-1})^{m-1}a)^2\] is negative, as a product of negative elements. Since $s>1$ by assumption, it must be that $ab^{-1}>1$ and $1>ba^{-1}$. Now write \[(b^{-1}a)^m = b^{-2}((ba^{-1})(a^2b^{-2}))^{m-1}ba\] so that \[s=b(b^2(b^{-1}a)^m)^2=b(((ba^{-1})(a^2b^{-2}))^{m-1}ba)^2\] forcing $a^2b^{-2}>1$ and $1>b^2a^{-2}$ (otherwise $1>s$, a contradiction).

Now suppose that $1>a^2w^m$. Then \[1>(b^2a^{-2})(a^2w^m)=b^2w^m\] hence \[1>s=b(b^2w^m)^2\] as a product of negative elements, a contradiction. This proves that $a^2w^m>1$.

Similarly, $s=a(aw^m)^2>1$ forces $aw^m>1$. 

Now, revisiting the rewriting \[\mu^Ns=a^2w^m(\mu^{-1}w)^{N}aw^m,\] we see that $\mu^Ns>1$ is a product of the positive elements $\mu^{-1}$, $w$, $aw^m$ and $a^2w^m$. \end{proof}

\subsection{Surgery on torus knots with one added twist.} In this section we consider the family of twisted torus knots $T^1_{3,3k+2}$, where $m=1$ and $k\ge0$ varies.  These examples provide an infinite family of hyperbolic knots, as a consequence of Thurston's Hyperbolic Dehn Filling Theorem \cite{Thurston1980}. 

To see this, we observe that the complement $S^3\smallsetminus\nu(T^1_{3,3k+2})$ may be obtained by $-\frac{1}{k}$-filling of one component of a link depicted in Figure \ref{fig:hyperbolic-link}. Since this link complement is hyperbolic (as verified by SnapPea, for example, combined with work of Moser \cite{Moser2009}), all but finitely many of the $-\frac{1}{k}$-fillings must be hyperbolic as well.   


\begin{figure}[ht!]\labellist \small
	\pinlabel $-\frac{1}{k}$ at 134 455
	\endlabellist
\includegraphics[scale=0.5]{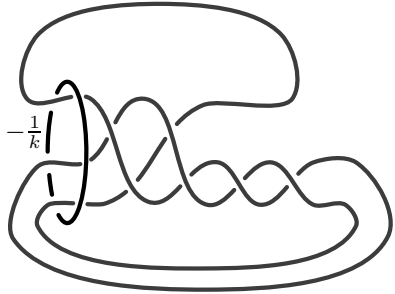}
\caption{The exterior of the knot $T^1_{3,3k+2}$ may be obtained via $-\frac{1}{k}$-surgery on the unknotted component of the hyperbolic link shown.}\label{fig:hyperbolic-link}\end{figure}

It is also easy to verify that the knots $T^1_{3,3k+2}$ are distinct from the pretzel knots given by $T^m_{3,5}$ (considered in the previous section). For example, the Khovanov homology of the former is supported on $k+1$ diagonals, while the latter is supported on exactly $2$ diagonals for every $m$ (see Watson \cite{Watson2010}). In fact, these calculations give an alternate method for proving that the knots $T^1_{3,3k+2}$ are non-torus for $k>0$. We remark that the case $k=0$ gives rise to the torus knot $T^1_{3,2}\simeq T_{2,5}$. 

On the other hand, it is a consequence of the surgery description of this family of knots, together with an unpublished observation due to Hedden, that the knots $T^1_{3,3k+2}$ are L-space knots. Since the genus of these knots is given by $3k+2$, surgery greater than $6k+3$ will always give rise to an L-space.  In this final section, we prove:

\begin{theorem}
\label{thm:twisted}
Suppose that $q$ is a positive integer congruent to $2$ modulo $3$.  If $r > 3q+2$, then $r$-surgery on the twisted torus knot $T_{3,q}^1$ yields a manifold with non-left-orderable fundamental group.
\end{theorem}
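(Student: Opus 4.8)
The plan is to reduce Theorem~\ref{thm:twisted} to a single statement about left-orderings of $G = \pi_q^1$ and then to verify that statement by an argument modelled on the proof of Theorem~\ref{thm:pretzel}. Setting $q = 3k+2$ and $m=1$, the defining relation becomes $a^2 b^{-k}a^2 = b^{k+1}ab^{k+1}$, the meridian is $\mu = b^{k+1}a^{-1}$, the surface framing is $s = a^2 b^{-k}a^2 b^{-k}a$, and the longitude satisfies $\lambda = \mu^{-(3q+2)}s$, so that $s = \mu^{3q+2}\lambda$. Hence, applying Corollary~\ref{crl:main-criteria} to the slope $3q+2$ (that is, with $\mu^{3q+2}\lambda$ playing the role of $\mu^p\lambda^q$), it suffices to show that for every left-ordering $>$ of $G$ the inequality $s>1$ forces $\mu^N s > 1$ for all integers $N>0$; this yields non-left-orderability for every rational slope $r > 3q+2$.

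To prove that implication I would first dispose of the trivial case: if $\mu>1$ then $\mu^N s$ is a product of positive elements, so I assume $\mu<1$, i.e.\ $\mu^{-1} = ab^{-(k+1)} > 1$. Writing $w = b^{-k}a$, two factorizations of $s$ drive the argument. Using the relation one checks, by induction on $N$, the identity $\mu^N s = a^2 w(\mu^{-1}w)^N a w$ (the inductive step amounts to $\mu a^2 w = a^2 w\,\mu^{-1}w$, which is a restatement of the defining relation), and separately that $s = a(aw)^2$ and $s = b^{k+1}aba$.

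I would then split on the sign of $w$. If $w>1$, then $w>1$ gives $a > b^k$; were $a<1$ this would force $b<1$ and hence $s = b^{k+1}aba < 1$, a contradiction, so $a>1$, and the identity $\mu^N s = a^2 w(\mu^{-1}w)^N aw$ exhibits $\mu^N s$ as a product of the positive elements $a^2 w$, $\mu^{-1}w$, and $aw$. If instead $w<1$, then $w<1$ gives $a<b^k$, and the factorization $s = a(aw)^2$ together with $s>1$ forces $a>1$ (otherwise $s$ is a product of negative elements), whence $b>1$ and $b^{k+1} > a$, so that $a^{-1}b^{k+1}>1$; rewriting $\mu^N = b^{k+1}(a^{-1}b^{k+1})^{N-1}a^{-1}$ and using $s = b^{k+1}aba$ gives $\mu^N s = b^{k+1}(a^{-1}b^{k+1})^N aba$, again a product of positive elements. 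In both cases $\mu^N s > 1$, completing the argument.

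The delicate points — and what I expect to be the main obstacle — are twofold. First, one must produce exactly the right factorization of $\mu^N s$ in each case so that every factor is visibly positive; the identity $\mu^N s = a^2 w(\mu^{-1}w)^N aw$ is the engine that packages all $N$ at once, and discovering it (rather than arguing $N$ by $N$) is the key algebraic step. Second, every inequality must be manipulated using left-invariance alone: a left-ordering is \emph{not} invariant under right multiplication, so one cannot, for instance, pass from $b^{k+1}>a$ to $\mu = b^{k+1}a^{-1} > 1$, and the sign-forcing arguments must be arranged to respect this. Because $k$ is now arbitrary and — in contrast to the torus knot case — there is no central element such as $b^q$ to exploit, the whole burden of controlling the powers $b^{-k}$ and $b^{k+1}$ falls on the single relation $a^2 b^{-k}a^2 = b^{k+1}ab^{k+1}$, and the main work is checking that $s>1$ propagates through this relation to force the required positivity uniformly in $k$.
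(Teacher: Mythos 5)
Your proof is correct, and I verified the computations that carry it: the inductive step $\mu a^2w=a^2w\mu^{-1}w$ is precisely a rewriting of the relation $a^2b^{-k}a^2=b^{k+1}ab^{k+1}$, so your formula $\mu^Ns=a^2w(\mu^{-1}w)^Naw$ holds for all $N\ge0$; the factorizations $s=a(aw)^2$ and $s=b^{k+1}aba$ are both consequences of the relation; and every sign manipulation you make uses left-invariance only. Your skeleton is the paper's: reduce via Corollary \ref{crl:main-criteria} (with $p=3q+2$ and denominator $1$, so $s=\mu^{3q+2}\lambda$ and $\mu^Ns=\mu^{3q+2+N}\lambda$) to the statement that $s>1$ forces $\mu^Ns>1$ in every left-ordering, dispose of $\mu>1$ trivially, and exhibit $\mu^Ns$ as a product of visibly positive elements. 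Where you genuinely differ is the case decomposition, and yours is leaner. The paper runs through sign patterns of $(a,b)$: it excludes $a,b<1$ and $b>1>a$ using $s>1$, handles $a>1>b$ directly, and splits $a,b>1$ into $b>a>1$ and $a>b>1$, the latter requiring a further dichotomy (at least one of $b^{2k+2}a^{-2}$, $a^2b^{-2k-1}$ must be positive) -- roughly six cases in all. You split only on the sign of $w=b^{-k}a$: when $w>1$, your identity $\mu^Ns=a^2w(\mu^{-1}w)^Naw$ (which, after regrouping, is the same expression as the paper's Subcase II, $a^2b^{-k}(a^2b^{-2k-1})^Na^2b^{-k}a$) does all the work once $s>1$ forces $a>1$; when $w<1$, you combine the telescoping $\mu^N=b^{k+1}(a^{-1}b^{k+1})^{N-1}a^{-1}$ (also used in the paper's Case 1) with the $b$-side form $s=b^{k+1}aba$, where the paper instead uses $s=a(\mu^{-1}ba)^2$. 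So the same few factorizations appear in both proofs; your contribution is the observation that two of them, organized by the sign of $w$, cover all configurations, which shortens the argument and makes transparent that everything rests on the single commutation rule $\mu a^2w=a^2w\mu^{-1}w$, packaged uniformly in $N$ by induction rather than by substituting the conjugated form of $\mu$. One shared loose end worth a sentence in a final write-up: both you and the paper invoke trichotomy on elements such as $a$, $b$, $w$, tacitly assuming none equals $1$; this is harmless because each has nonzero image in $H_1\cong\mathbb{Z}$ (with $a\mapsto3k+2$, $b\mapsto3$ one gets $w\mapsto2$ and $\mu\mapsto1$), but it should be said.
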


Thus, writing $q = 3k+2$ where $k\ge0$, we consider the $(3,3k+2)$ torus knot with one positive twist.  The corresponding knot group is
\[ \pi_q^1 =  \langle a, b | a^2(b^{-k} a) a = b^{2k+1}(b^{-k}a) b^{k+1} \rangle, 
\]
together with the meridian
\[ \mu = b^{k+1} a^{-1} = (b^{-k}a)^{-1}b^{-2k-1}a^2(b^{-k}a)
\]
and surface framing 
\[ s = \mu^{3q+2} \lambda = a^2 (b^{-k}a)a(b^{-k}a) = b^{2k+1}(b^{-k}a)b^{k+1}(b^{-k}a),
\]
where $s=\mu^{3q+2}\lambda$. Theorem \ref{thm:twisted} follows from an application of Corollary \ref{crl:main-criteria} together with the following proposition:

\begin{proposition}
If $>$ is any left-ordering of $\pi_q^1$, then $s>1$ implies $\mu^N s >1 $ for all $N>0$.
\end{proposition}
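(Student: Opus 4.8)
The strategy mirrors the structure of the proof for the pretzel knots $T^m_{3,5}$ given in the previous subsection, adapting the case analysis to the group $\pi_q^1$ with its parameter $k$. The goal is to show $\mu^N s > 1$ whenever $s > 1$, for every $N > 0$ and every left-ordering $>$. The first observation is that if $\mu = b^{k+1}a^{-1} > 1$, the conclusion is immediate: $\mu^N s$ is then a product of positive elements. So the entire content lies in the case $\mu < 1$, equivalently $\mu^{-1} = ab^{-(k+1)} > 1$. Throughout I expect to exploit, exactly as before, that $b^q = b^{3k+2}$ (the image of $a^3$ under the relation, up to the central framing element) and the identities recorded just before the proposition relating $\mu$, $s$, and the word $w = b^{-k}a$.

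\textbf{Key steps.} First I would set $w = b^{-k}a$ and rewrite $s = a^2 w a w = a(aw)^2$, so that $s > 1$ forces sign conditions on $a$ and $w$ that split the argument into the two cases $w > 1$ and $1 > w$, just as in the pretzel proof. In each case the aim is to re-express $\mu^N s$ as a single word in which every factor can be certified positive. The central computational identity to establish is the analogue of the pretzel formula, something of the form
\[
\mu^N s = a^2 w (\mu^{-1} w)^N a w
\]
(up to conjugation by $s$, using that $\mu^N s = s \mu^N$ is not automatic but the rewriting can be arranged), obtained by substituting the factored expressions for $\mu^N$ and for the powers of $b$ and using $b^q$-centrality to shuffle exponents. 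Once this rewriting is in hand, positivity of $\mu^N s$ reduces to positivity of the individual factors $\mu^{-1}$, $w$, $aw$, and $a^2 w$ (or $a^2 w^m$-type terms with $m=1$). I would then show, in the subcase where $a$ and $b$ are both negative, that the assumption $s > 1$ propagates through a chain of forced inequalities — first $ab^{-1} > 1$, then $a^2 b^{-2} > 1$, and finally $a^2 w > 1$ and $aw > 1$ — each deduced by assuming the contrary and exhibiting $s$ as a product of negative elements, contradicting $s > 1$.

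\textbf{The main obstacle.} The hard part is the bookkeeping in the rewriting step: producing the clean expression $\mu^N s = a^2 w (\mu^{-1} w)^N a w$ requires carefully inserting and cancelling powers of $b$ and invoking the centrality of $b^q$ to reduce a general exponent $n(q+j)$ or $(1-n)j$ modulo $q$, and it must be checked that the leftover $b$-powers combine into manifestly positive factors rather than sign-indeterminate ones. The subtlety specific to $\pi_q^1$, as opposed to the pretzel family, is that the longitude framing exponent is $3q+2 = 3(3k+2)+2$ and the meridian $\mu = b^{k+1}a^{-1}$ now carries the parameter $k$, so the intermediate inequalities ($ab^{-1}>1 \Rightarrow a^2b^{-2}>1 \Rightarrow \cdots$) must be rederived with $b^{k+1}$ in place of $b^2$, and one must verify that the minimal-$n$ argument used to guarantee positivity of the "remainder" factors still closes. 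I expect the $w > 1$ and $1 > w$ cases to be genuinely asymmetric, with the negative-$a$ subcase of $w > 1$ demanding the full inequality chain and hence carrying the real weight of the proof.
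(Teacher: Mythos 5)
There is a genuine gap, and it sits exactly where you place the technical weight of the argument. Your plan leans on ``invoking the centrality of $b^q$ to reduce a general exponent \dots modulo $q$'' and on the minimal-$n$ argument; both are imported from the paper's torus-knot proposition, where they are legitimate because in $G_{p,q}$ the element $a^p=b^q$ generates the center. In $\pi_q^1$ there is no relation $a^3=b^q$ at all, and for $k\ge 1$ these knots are hyperbolic, so $\pi_q^1$ has trivial center: neither $b^{3k+2}$ nor the framing element $s$ is central. (Your parenthetical that $b^q$ is ``the image of $a^3$ under the relation, up to the central framing element'' is also incorrect: the relation equates the two expressions $a^2(b^{-k}a)a$ and $b^{2k+1}(b^{-k}a)b^{k+1}$ for $s$, and $s$ does not commute with the generators.) So every step that shuffles powers of $b$ past other letters, and the mod-$q$ reduction that makes the minimal-$n$ argument close, is unavailable here. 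Conversely, the one point you hedge on is actually free: $\mu^N s = s\mu^N$ holds automatically, since $\mu$ and $s$ both lie in the peripheral subgroup, which is abelian; the paper uses this without comment.

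Your central identity $\mu^N s = a^2 w(\mu^{-1}w)^N a w$ (with $w=b^{-k}a$) is correct --- it follows from the alternate expression $\mu = w^{-1}b^{-2k-1}a^2 w$ together with peripheral commutativity, and it is essentially the rewriting the paper uses in one of its subcases --- but it only certifies positivity when $\mu^{-1}$, $w$, $aw$ and $a^2w$ are all positive, and this cannot be arranged in every case. The paper's case structure (different from yours) shows where the real work lies: $s>1$ forces $a>1$ outright, since $a,b<1$ gives $s=b^{k+1}aba<1$ and $b>1>a$ gives $s=a^2(b^{-k}a)a(b^{-k}a)<1$; hence your ``negative-$a$ inequality chain,'' copied from the pretzel proof, is vacuous for $m=1$. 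The case $a>1>b$ is easy, and the hard case is $a,b>1$. There, if $b>a$ one rewrites $\mu^N s = b^{k+1}(a^{-1}b^{k+1})^{N-1}(\mu^{-1}ba)^2$; if $a>b$ one observes that at least one of $b^{2k+2}a^{-2}$ and $a^2b^{-2k-1}$ is positive (their product is $b$) and gives a separate rewriting for each. Your identity handles the subcase $a^2b^{-2k-1}>1$, but in the complementary subcase $b^{2k+2}a^{-2}>1$ the factor $\mu^{-1}w = ab^{-2k-1}a$, a conjugate of $a^2b^{-2k-1}$, need not be positive, and your declared fallback for exactly this situation is the centrality/minimal-$n$ machinery that does not exist in $\pi_q^1$. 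As written, the plan does not close; what is missing is the second rewriting $\mu^N s = a\mu^{-1}ba\mu^{-1}b\mu^{-1}(w\mu^{-1})^N b^{k+1}$ with $w=b^{2k+2}a^{-2}$, or some substitute for it.
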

\begin{proof} We consider all possible choices of signs for the generators $a$ and $b$ of $\pi_q^1$.  There are 6 cases to consider in general, though in the present  setting it is possible to reduce the argument very quickly to the cases where $a$ and $b$ are both positive elements. 

To this end, observe that if both $1>a$ and $1>b$ then \[s=b^{2k+1}(b^{-k}a)b^{k+1}(b^{-k}a)=b^{k+1}aba\] is a product of negative elements, hence $1>s$. Thus at least one of the generators $a$ or $b$ must be positive given the hypothesis that $s>1$.

Similarly, if $b>1>a$ then $b^k>a$ and $1>b^{-k}a$ so that \[s=a^2(b^{-k}a)a(b^{-k}a)\] is a product of the negative elements $a$ and $b^{-k}a$, hence $1>s$. 

On the other hand, if $a>1>b$ then \begin{align*} \mu^Ns = s\mu^N 
&= a^2(b^{-k}a)a(b^{-k}a)(b^{-k}a)^{-1}(b^{-2k-1}a^2)^N(b^{-k}a) \\
&= a^2(b^{-k}a)a(b^{-2k-1}a^2)^N(b^{-k}a) 
\end{align*}
hence $\mu^Ns>1$ as a product of the positive elements $a$ and $b^{-1}$ for all $N\ge0$.  

It remains then to consider the case in which both $a$ and $b$ are positive. Suppose that $s>1$, and let $N>0$ be any positive integer. As before if $\mu>1$, then the conclusion $\mu^N s >1 $ follows trivially, so we suppose that $\mu$ is negative, observe that $\mu^{-1}=ab^{-k-1}>1$, and record
\begin{equation}
\label{eqn:rewrite}
ab^{-k}= \mu^{-1}b >1
\end{equation}
for use in the remainder of the proof. We consider two cases. 

\textbf{Case 1:} $b>a>1$

In this case we have that $b^\ell>a$ for any positive integer $\ell$, hence $a^{-1}b^\ell>1$. Now write \[\mu^N=(b^{k+1}a^{-1})^N=b^{k+1}(a^{-1}b^{k+1})^{N-1}a^{-1}\] and \[s=a(a(b^{-k}a))^2=a(\mu^{-1}ba)^2\] using Equation (\ref{eqn:rewrite}). This gives rise to \begin{align*}
\mu^Ns &= b^{k+1}(a^{-1}b^{k+1})^{N-1}a^{-1}a(\mu^{-1}ba)^2 \\
&= b^{k+1}(a^{-1}b^{k+1})^{N-1}(\mu^{-1}ba)^2
\end{align*}
which we recognize as the product of the positive elements $a$, $b$, $a^{-1}b^{k+1}$ and $\mu^{-1}$, hence $\mu^Ns>1$ for all $N\ge 0$. 

\textbf{Case 2:} $a>b>1$

In this case we note that either $b^{2k+2}a^{-2}>1$ or $a^2b^{-2k-1}>1$ since, should these both be negative elements, the product \[(b^{2k+2}a^{-2})(a^2b^{-2k-1})=b\] would be negative and give a contradiction. This observation gives rise to two subcases. 

{\bf Subcase I:} $w=b^{2k+2}a^{-2}>1$

Write \[\mu^N = (b^{k+1}a^{-1})^N=b^{-k-1}((b^{2k+2}a^{-2})(ab^{-k-1}))^{N}b^{k+1}=b^{-k-1}(w\mu^{-1})^{N}b^{k+1}\] and \[s=a(a(b^{-k}a))^2=a(\mu^{-1}ba)^2=a\mu^{-1}ba\mu^{-1}ba\] (using Equation (\ref{eqn:rewrite}), as before), so that \begin{align*}
 \mu^Ns = s\mu^N 
 &= a\mu^{-1}ba\mu^{-1}bab^{-k-1}(w\mu^{-1})^{N}b^{k+1} \\
 &= a\mu^{-1}ba\mu^{-1}b\mu^{-1}(w\mu^{-1})^{N}b^{k+1}
\end{align*}
so that $\mu^Ns>1$ as a product of the positive elements $a$, $b$, $\mu^{-1}$ and $w$.

{\bf Subcase II:} $w=a^2b^{-2k-1}>1$

Write \[(b^{-2k-1}a^2)^N =  b^{-2k-1}(a^2b^{-2k-1})^{N-1}a^2 = b^{-2k-1}w^{N-1}a^2 \] so that \begin{align*} \mu^Ns =s\mu^N &= 
a^2(b^{-k}a)a(b^{-k}a)(b^{-k}a)^{-1}(b^{-2k-1}a^2)^N(b^{-k}a) \\
&= 
a^2(b^{-k}a)ab^{-2k-1}w^{N-1}a^2(b^{-k}a) \\
&= 
a^2b^{-k}w^{N}a^2b^{-k}a \\
&= a\mu^{-1}bw^{N}a\mu^{-1}ba
\end{align*}
using Equation (\ref{eqn:rewrite}) in the last step. This shows that $\mu^Ns>1$, again as a product of the positive elements $a$, $b$, $\mu^{-1}$ and $w$. 
\end{proof}

\bibliographystyle{plain}

\bibliography{LO}

\end{document}